\documentclass[12pt]{amsart}
\usepackage{amsmath, amsfonts, amssymb, amsthm,hyperref,mathtools,array}
\usepackage[T1]{fontenc}
\hypersetup{hypertex=true,
	colorlinks=true,
	linkcolor=blue,
	anchorcolor=blue,
	citecolor=blue}
\usepackage{bm}
\allowdisplaybreaks[4]
\def\lcm{\rm{lcm}}
\def\ord{{\rm ord}}

\def\a{{\bm a}}

\def\v{{\bm v}}
\def\w{{\bm w}}
\def\x{{\bm x}}

\def\0{{\bm 0}}

\def\1{{\bf 1}}

\def\bad{{\rm bad}}
\def\good{{\rm good}}

\def\lcm{{\rm lcm}}
\def\diam{{\rm diam}}
\def\pmod #1{\ ({\rm{mod}}\ #1)}
\def\mod #1{\ {\rm mod}\ #1}
\def\Ack{\medskip\noindent {\bf Acknowledgments}}

\theoremstyle{plain}
\newtheorem{theorem}{Theorem}[section]
\newtheorem{lemma}{Lemma}
\newtheorem{corollary}{Corollary}

\newtheorem{definition}{Definition}

\theoremstyle{remark}

\makeatletter
\@namedef{subjclassname@2020}{%
	\textup{2020} Mathematics Subject Classification}
\makeatother
\vspace{4mm}

\begin{document}
	
	\title[Arithmetic progressions in dense subsets of primitive elements of finite fields]
	{Arithmetic progressions in dense subsets of primitive elements of finite fields}
	\author[H.-L. Wu]{Hai-Liang Wu}
	
	\address {(Hai-Liang Wu) School of Science, Nanjing University of Posts and Telecommunications, Nanjing 210023, People's Republic of China}
	\email{\tt whl.math@smail.nju.edu.cn}

	\keywords{arithmetic progressions, Szemer\' edi's theorem, primitive elements, finite fields. 
		\newline \indent 2020 {\it Mathematics Subject Classification}. Primary 11L40, 11B25; Secondary 11A07.
		\newline \indent This research was supported by the National Natural Science Foundation of China (Grant No. 12671009).}
	
	\begin{abstract}
	Let $p$ be a prime and let $\mathcal{P}_p$ be the set of primitive elements of $\mathbb{F}_p$. Inspired by the work of Cohen, Oliveira e Silva and Trudgian on consecutive primitive elements, and by the work of Chang on arithmetic progressions in multiplicative subgroups of finite fields, in this paper, using estimates for multiplicative character sums over systems of linear forms together with the relative Szemer\'edi's theorem of Conlon, Fox and Zhao, we show that, for every fixed integer $k\ge3$ and any fixed number $0<\delta\le 1$, each subset $A\subseteq\mathcal{P}_p$ with $\#A\ge\delta\#\mathcal{P}_p$ contains a nontrivial $k$-term arithmetic progression, provided that $p$ is sufficiently large. Also, by applying Behrend's theorem, we construct a large subset $A'$ of $\mathcal{P}_p$ with $\#A'=\lfloor(\#\mathcal{P}_p)^{1-\varepsilon}\rfloor$ such that $A'$ contains no nontrivial $k$-term arithmetic progressions. 
	\end{abstract}
	\maketitle

	\section{Introduction}
	\setcounter{lemma}{0}
	\setcounter{theorem}{0}
	\setcounter{equation}{0}
	\setcounter{conjecture}{0}
	\setcounter{remark}{0}
	\setcounter{corollary}{0}
	
	\subsection{Notation} For any positive integer $N\ge2$, let $\mathbb{Z}/N\mathbb{Z}$ be the ring of residue classes modulo $N$. Throughout this paper, $p$ denotes a prime and 
	$$\mathbb{F}_p=\mathbb{Z}/p\mathbb{Z}=\{x\mod{p\mathbb{Z}}: 0\le x\le p-1\}$$ 
	is the finite field with $p$ elements. Let $\mathbb{F}_p^*=\mathbb{F}_p\setminus\{0\}$ be the multiplicative cyclic group of all nonzero elements of $\mathbb{F}_p$. An element $g\in\mathbb{F}_p^*$ is said to be a primitive element of $\mathbb{F}_p$ if $g$ generates the group $\mathbb{F}_p^*$. The set of all primitive elements of $\mathbb{F}_p$ is written as $\mathcal{P}_p$. The group of all multiplicative characters of $\mathbb{F}_p$ is denoted by $\widehat{\mathbb{F}_p^*}$, and we use $\chi_0$ to denote the trivial character. Given any multiplicative character $\chi: \mathbb{F}_p^*\rightarrow\mathbb{C}$, we extend $\chi$ to $\mathbb{F}_p$ by defining $\chi(0)=0$, and let $\ord(\chi)$ denote the order of $\chi$. 
	
	Given any integer $3\le k\le p$ and any $x_0,d\in\mathbb{F}_p$ with $d\in\mathbb{F}_p^*$, the sequence 
	$$x_0,x_0+d,x_0+2d,\cdots,x_0+(k-1)d$$
	is called a nontrivial arithmetic progression of length $k$ over $\mathbb{F}_p$. For any nonempty  subset $A\subseteq\mathbb{F}_p$, the characteristic function of $A$ is written as $1_A$, i.e., 
	$$1_A(x)=\begin{cases}
		1 & \mbox{if}\ x\in A,\\
		0 & \mbox{otherwise}.
	\end{cases}$$
	We use $\#A$ to denote the cardinality of $A$. Also, for any vector space $W$ over $\mathbb{F}_p$, let $\dim(W)$ be the dimension of $W$. 
	
	Let $f$ and $g$ be functions defined on the set of all primes. We adopt the standard notations in analytic number theory: $f(p)\ll g(p)$ (or $f=O(g)$) means that there exist a positive real number $C$ and a real number $x_0$ such that 
	$$|f(p)|\le C\cdot g(p)$$
	for every prime $p\ge x_0$. The symbol $f(p)\asymp g(p)$ means that $f(p)\ll g(p)$ and $g(p)\ll f(p)$. If $g(p)$ is non-vanishing for any prime $p\ge x_0$, then the symbol $f(p)=o(g(p))$ means that 
	$$\lim_{p\to+\infty}\frac{f(p)}{g(p)}=0.$$
	
	\subsection{Background and motivation}
	Verifying the existence of arithmetic progressions of length $k$ in a given set is a classical problem in additive combinatorics. For example, the well-known Green-Tao theorem \cite{GT} states that the set of primes contains arbitrary long nontrivial arithmetic progressions. Concerning $3$-term arithmetic progressions, the famous theorem of Roth \cite{Roth} states that if a subset $A\subseteq\{1,2,\cdots, N\}$ has no nontrivial $3$-term arithmetic progression, then 
	$$\#A=O(N/\log\log N).$$
	 By applying the polynomial method, Croot, Lev, and Pach \cite{CLP} studied the Roth-type problems in finite abelian groups. In fact, they showed that any subset $A\subseteq \left(\mathbb{Z}/4\mathbb{Z}\right)^n$ free of nontrivial $3$-term arithmetic progressions has size 
	$$\#A\le 4^{\gamma n},$$
	where $\gamma\approx0.926$. Later by extending the method of Croot, Lev, and Pach \cite{CLP}, Ellenberg and Gĳswĳt \cite{EG} proved that if $A\subseteq\mathbb{F}_3^n$ such that $A$ contains no nontrivial $3$-term arithmetic progressions, then $\#A=o(2.756^n)$. 
	
	On the other hand, the celebrated Szemer\'edi theorem \cite{Sze} shows that any dense subset of the positive integers contains arbitrarily long arithmetic progressions. Specifically, for a fixed integer $k\ge3$ and a fixed real number $0<\delta\le 1$, if $N$ is sufficiently large, then any subset $A\subseteq \mathbb{Z}/N\mathbb{Z}$ with density at least $\delta$ contains an arithmetic progression of length $k$. Later Conlon, Fox and Zhao \cite{CFZ} established the useful relative Szemer\'edi theorem. 
	
	Arithmetic progressions in finite fields have also been investigated extensively. For example, by applying the extension of Szemer\'edi’s theorem for pseudorandom weights due to Green and Tao \cite{GT}, Chang \cite{Chang} studied the arithmetic progressions in a multiplicative subgroup of $\mathbb{F}_p^*$. More precisely, given an integer $k\ge3$ and a real number $0<\delta\le 1$, if $p$ is sufficiently large and $H\le \mathbb{F}_p^*$ is a subgroup with $\#H>p^{1-\frac{1}{k\cdot 2^{k+1}}}$, then Chang \cite[Theorem 1]{Chang} proved that any subset $A\subseteq H$ with $\#A/\#H>\delta$ contains a nontrivial $k$-term arithmetic progression. 
	
	In addition to multiplicative groups of $\mathbb{F}_p^*$, the set $\mathcal{P}_p$ of all primitive elements of $\mathbb{F}_p$ is also an important object of study in finite fields. Cohen and his collaborators \cite{Cohen15} studied arithmetic progressions with common difference $1$ in $\mathcal{P}_p$. For example, they \cite[Theorem 1.1]{Cohen15} showed that $\mathcal{P}_p$ contains a $3$-term arithmetic progression of the form $x,x+1,x+2$ whenever $p>169$. Moreover, given an integer $k\ge4$, Cohen and his collaborators \cite[Theorem 2]{Cohen15} also showed that there is an element $x\in\mathbb{F}_p$ such that $x,x+1,\cdots,x+(k-1)\in\mathcal{P}_p$, provided that $p$ is sufficiently large. 
	
	Inspired by the above results, it is natural to consider the existence of arithmetic progressions of length $k$ in dense subsets of $\mathcal{P}_p$.

	\subsection{Main results} Now we state our first theorem.
	
	\begin{theorem}\label{Thm. A}
		Let $k\ge3$ be an integer and $0<\delta\le 1$ be a real number. Then for every sufficiently large prime $p$, any subset $A\subseteq\mathcal{P}_p$ with $\#A/\#\mathcal{P}_p\ge \delta$ contains a non-trivial arithmetic progression of length $k$ over $\mathbb{F}_p$. 
	\end{theorem}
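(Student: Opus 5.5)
We will apply the relative Szemer\'edi theorem of Conlon, Fox and Zhao in the group $\mathbb{Z}/p\mathbb{Z}=\mathbb{F}_p$. Its input is a majorant $\nu:\mathbb{F}_p\to\mathbb{R}_{\ge0}$ satisfying the $k$-linear forms condition. Its conclusion is that for any $f$ with $0\le f\le\nu$ and $\mathbb{E}_x f(x)\ge\delta$, we have
$$\mathbb{E}_{x,d}f(x)f(x+d)\cdots f(x+(k-1)d)\ge c(k,\delta)-o(1).$$
We take the natural majorant
$$\nu(x)=\frac{p}{\#\mathcal{P}_p}1_{\mathcal{P}_p}(x),\qquad f=\frac{p}{\#\mathcal{P}_p}1_A.$$
Then $0\le f\le\nu$ and $\mathbb{E}f=\#A/\#\mathcal{P}_p\ge\delta$. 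Hence the $k$-AP count of $A$ is at least about $c(k,\delta)(\#\mathcal{P}_p/p)^k p^2$. The trivial progressions ($d=0$) contribute only about $(\#\mathcal{P}_p/p)^k p$. Since $\#\mathcal{P}_p=\varphi(p-1)\gg p/\log\log p$, the main term wins for large $p$, and a nontrivial progression exists.

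The whole task is therefore to verify the linear forms condition for $\nu$. We expand $1_{\mathcal{P}_p}$ by the standard Vinogradov formula
$$1_{\mathcal{P}_p}(x)=\frac{\varphi(p-1)}{p-1}\sum_{e\mid p-1}\frac{\mu(e)}{\varphi(e)}\sum_{\ord(\chi)=e}\chi(x),\qquad x\ne0,$$
so that $\nu(x)=\frac{p}{p-1}\sum_{e\mid p-1}\frac{\mu(e)}{\varphi(e)}\sum_{\ord\chi=e}\chi(x)$ for $x\ne 0$, and $\nu(0)=0$. For the CFZ condition we need, for every system of linear forms $L_{j,\omega}$ (indexed by $j\le k$, $\omega\in\{0,1\}^{k-1}$, with the specific shape from \cite{CFZ}), and every subset of exponents,
$$\mathbb{E}_{\x\in\mathbb{F}_p^{2(k-1)}}\prod_{(j,\omega)}\nu(L_{j,\omega}(\x))=1+o(1).$$
Expanding the product gives a sum over tuples of characters $(\chi_{j,\omega})$. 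The all-trivial tuple contributes the main term $1+O(1/p)$, the correction coming from the zero locus of the forms. For any other tuple we need the bound
$$\Bigl|\mathbb{E}_{\x}\prod\chi_{j,\omega}(L_{j,\omega}(\x))\Bigr|\ll_k p^{-1/2}.$$
This is the character-sum estimate over systems of linear forms mentioned in the abstract. To prove it, we first use pairwise linear independence of the forms: some coordinate of $\x$ enters a form carrying a nontrivial character in a way not absorbed by the other forms. After fixing the remaining variables, the sum in that coordinate becomes a one-variable sum $\sum_t\prod_i\chi_i(a_i t+b_i)$. When the $b_i/a_i$ are distinct and some $\chi_i$ is nontrivial, the Weil bound gives $O_k(\sqrt p)$. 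Degenerate fixings, where two shifts coincide, form a proper subvariety, and we handle them by induction or absorb them into an $O(1/p)$ loss.

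The main obstacle is summing these error terms. The number of nontrivial character tuples is huge, since the weights are $\prod\mu(e)/\varphi(e)$ summed over all characters. The total weight of a single factor is
$$\sum_{e\mid p-1}\frac{|\mu(e)|}{\varphi(e)}\varphi(e)=2^{\omega(p-1)}.$$
So the total error is at most about
$$2^{\omega(p-1)\cdot k2^{k-1}}p^{-1/2}.$$
Since $2^{\omega(p-1)}=p^{o(1)}$, this is still $o(1)$ for fixed $k$. We therefore need the Weil constant to depend only on $k$, which holds because the number of factors is at most $k2^{k-1}$. We also need the number of degenerate configurations to be bounded in terms of $k$ alone. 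Carrying out this degeneracy bookkeeping carefully, so that every nontrivial tuple genuinely yields $O_k(p^{-1/2})$ cancellation, is the step requiring the most care. The first approach to try is choosing one variable $x_i^{(0)}$ at a time as in the Green--Tao verification, and showing that the forms containing it are pairwise non-proportional in that variable.
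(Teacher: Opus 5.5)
Your proposal is correct and follows essentially the paper's route. Both use a majorant proportional to $1_{\mathcal{P}_p}$ and the Vinogradov expansion. Both apply a one-variable Weil bound on non-degenerate slices, absorb the degenerate slices (which lie on $O_k(1)$ hyperplanes) as an $O(1/p)$ loss, bound the total character weight by $2^{\omega(p-1)\cdot O_k(1)}=p^{o(1)}$, and then invoke Conlon--Fox--Zhao. The only cosmetic difference is the slicing direction: you slice along a coordinate occurring in a form with a nontrivial character, while the paper slices along a direction $\v$ with $\v\cdot\a_i\neq0$ for every $i$, so that all forms depend on $t$.

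One small slip: the $d=0$ terms contribute $\#A\le p$ to $\sum_{x,d}1_A(x)\cdots 1_A(x+(k-1)d)$, not $(\#\mathcal{P}_p/p)^k p$. This is still dominated by $c\,p^2(\varphi(p-1)/p)^k\gg p^2/(\log\log p)^k$, so your conclusion stands.
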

	
	Let 
	$$\Omega_p=\left\{g\in\mathbb{Z}: 1\le g\le p-1\ \text{and $g$ is a primitive root modulo $p$}\right\}$$
	be the lift of $\mathcal{P}_p$ to $\mathbb{Z}$. As a direct consequence of Theorem \ref{Thm. A}, we have the following result.
	
	\begin{corollary}\label{Corollary of Thm. A}
		Let $k\ge3$ be an integer and $0<\delta\le 1$ be a real number. Then for every sufficiently large prime $p$, each subset $B\subseteq\Omega_p$ with $\#B/\#\Omega_p\ge\delta$ contains a nontrivial $k$-term arithmetic progression of length $k$ over $\mathbb{Z}$. 
	\end{corollary}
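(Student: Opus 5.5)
The plan is to deduce the corollary from Theorem \ref{Thm. A}, applied with a much larger length $K$ in place of $k$, and then to extract a genuine integer progression from the resulting progression over $\mathbb{F}_p$. The issue is that an AP $x_0,x_0+d,\dots$ in $\mathbb{F}_p$ whose terms are reduced into $[1,p-1]$ can "wrap around" modulo $p$. It is then not an AP in $\mathbb{Z}$. I will handle this with a Dirichlet approximation argument to make the step small, followed by a pigeonhole over the wrap-arounds.

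\emph{Step 1.} Let $A=\{b\bmod p\mathbb{Z}: b\in B\}\subseteq\mathcal{P}_p$. Reduction modulo $p$ is a bijection from $\Omega_p$ onto $\mathcal{P}_p$, so $\#A/\#\mathcal{P}_p=\#B/\#\Omega_p\ge\delta$. Fix a large integer $K$ depending only on $k$; taking $K=4k^2$ will do. By Theorem \ref{Thm. A} with length $K$, for all sufficiently large $p$ there are $x_0\in\{0,\dots,p-1\}$ and an integer $d$ with $p\nmid d$ such that $x_0+jd \bmod p$ lies in $A$ for $0\le j\le K-1$.

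\emph{Step 2 (shrinking the step).} Let $Q=\lfloor\sqrt K\rfloor$. Dirichlet's approximation theorem gives an integer $1\le q\le Q$ and an integer $e\equiv qd\pmod p$ with $|e|\le p/Q$. Since $1\le q\le Q<p$ and $p\nmid d$, we have $p\nmid qd$, so $e\ne0$. Put $L=\lfloor K/q\rfloor\ge K/Q-1$. Then the residues of $x_0+ie$ for $0\le i\le L-1$ are terms of the original progression, namely those with index $j=iq\le K-1$, so they all lie in $A$.

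\emph{Step 3 (pigeonhole over wrap-arounds).} Consider the integer sequence $y_i=x_0+ie$, which is monotone. Its total variation is $(L-1)|e|\le Lp/Q$, so it meets at most $L/Q+1$ consecutive blocks of the form $[mp,(m+1)p)$, $m\in\mathbb{Z}$. The indices $i$ with $y_i$ in a fixed block form a run of consecutive indices. On such a run, subtracting $mp$ gives integers $y_i-mp\in[0,p-1]$ forming an integer AP with nonzero difference $e$. These integers are exactly the representatives in $[0,p-1]$ of elements of $A$. Since $0\notin A$, they lie in $[1,p-1]$, so they are elements of $B$.

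Some run has length at least $L/(L/Q+1)$. With $K=4k^2$ this is roughly $Q\approx 2k\ge k$, and a slightly more careful choice of $K$ absorbs the floors. This yields a nontrivial $k$-term AP in $B$ over $\mathbb{Z}$. The only delicate point is the bookkeeping in this last count, making sure $K$ is large enough that a run of length $\ge k$ survives both the thinning to $L$ terms and the splitting at wrap-arounds. There is no real obstacle, since $K$ depends only on $k$ and Theorem \ref{Thm. A} applies for each fixed $K$.
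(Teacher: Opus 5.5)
Your argument is correct, but it takes a different route from the paper. The paper never lengthens the progression. It splits $B$ into $B\cap(0,p/2)$ and $B\cap(p/2,p)$ and applies Theorem~\ref{Thm. A} with density $\delta/2$ and the same $k$ to the larger piece $S$. It then uses Lemma~\ref{Lem. lift mod p to Z}: if $b_0,\dots,b_{k-1}\in S$ form a progression modulo $p$, each second difference $b_j-2b_{j+1}+b_{j+2}$ is divisible by $p$ but has absolute value at most $2\diam(S)<p$, so it vanishes. Wrap-around is thus ruled out in advance by a diameter condition, at the cost of halving the density. You keep the full density $\delta$ and pay in length instead: you invoke Theorem~\ref{Thm. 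A} with $K\asymp k^2$ and extract a non-wrapping subprogression by Dirichlet approximation and pigeonhole. In effect you prove a density-free fact: every $K$-term progression in $\mathbb{F}_p$, lifted to $[0,p-1]$, contains a genuine $k$-term integer progression. Since Theorem~\ref{Thm. A} holds for every fixed length and density, neither trade-off costs anything. The paper's version is shorter and needs no Diophantine input; yours isolates a reusable lifting lemma.

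There is one bookkeeping slip in Step~3. With only $|e|\le p/Q$, the values $y_0,\dots,y_{L-1}$ span an interval of length up to $(L-1)p/Q$. Such an interval can meet $\lceil (L-1)/Q\rceil+1$ blocks, which can exceed $L/Q+1$. Even your own bound $QL/(L+Q)$ is only about $Q/2$, not $Q$, when $L\approx Q$. The correct count guarantees only about $Q/3$ when $L$ is slightly above $Q$. So the count as written does not justify $K=4k^2$. There are two easy fixes:
\begin{itemize}
\item Take $K=9k^2$. Then $Q=3k$ and $L=\lfloor K/q\rfloor\ge\lfloor K/Q\rfloor\ge Q$, so the longest run exceeds $QL/(L+2Q)\ge Q/3=k$.
\item Keep $K=4k^2$ and note that any block strictly between the first and last contains more than $p/|e|-1\ge Q-1$ consecutive terms, hence at least $Q=2k$. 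If there are at most two blocks, some run has at least $L/2\ge k$ terms.
\end{itemize}
Either way the approach goes through.
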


		Roughly speaking, Theorem \ref{Thm. A} illustrates that every
		subset $A\subseteq\mathcal P_p$ with 
		$$\#A\asymp (\#\mathcal{P}_p)$$ 
		contains a nontrivial arithmetic progression of length $k$ for all sufficiently large $p$. The next theorem will show that Theorem \ref{Thm. A} is optimal at the level of the power of $\#\mathcal{P}_p=\varphi(p-1)$:  the condition $\#A\asymp (\#\mathcal{P}_p)$ cannot be replaced by $\#A\asymp (\#\mathcal{P}_p)^{1-\varepsilon}$ for any fixed real number $\varepsilon\in(0,1)$.

	\begin{theorem}\label{Thm. B}
		Fix an integer $k\ge3$ and a real number $\varepsilon\in(0,1)$. For every sufficiently large prime $p$, there is a subset $A\subseteq\mathcal{P}_p$ with $\#A=\lfloor (\#\mathcal{P}_p)^{1-\varepsilon}\rfloor$ such that $A$ contains no nontrivial arithmetic progression of length $k$.
	\end{theorem}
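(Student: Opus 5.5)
We will prove Theorem \ref{Thm. B} by transferring a Behrend-type set from the integers into $\mathcal{P}_p$ through a dilation that maps a long interval-like structure onto primitive elements. The natural idea is to find a long arithmetic progression $P=\{x_0+jd: 0\le j<M\}$ lying entirely in $\mathcal{P}_p$, and then take $A=\{x_0+jd: j\in S\}$, where $S\subseteq\{0,\dots,M-1\}$ has no nontrivial $k$-term AP. However, APs in $\mathcal{P}_p$ are too short for this, since character sum methods only give length $O(\log p)$. So we instead use a construction that does not require $A$ to sit inside one AP.

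\textbf{Main plan.} Let $N=\varphi(p-1)=\#\mathcal{P}_p$. We use Behrend's theorem in its $k$-AP-free form due to Rankin: for every $\eta>0$ and every large $M$ there is $S\subseteq\{1,\dots,M\}$ with no nontrivial $k$-term AP and $\#S\ge M^{1-\eta}$. We may also take $S\subseteq\{1,\dots,M\}$ free of nontrivial $k$-APs modulo $p$ whenever $M<p/k$, because any $k$-AP mod $p$ among integers in $[1,M]$ with $M(k-1)<p$ lifts to a genuine integer AP. We want $A\subseteq\mathcal{P}_p$ of the form $A=\{s\in S\}\cap\mathcal{P}_p$, or more flexibly $A=\{\lambda s: s\in S\}\cap\mathcal{P}_p$ for a suitable $\lambda\in\mathbb{F}_p^*$. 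Dilation preserves $k$-AP-freeness, and subsets of AP-free sets are AP-free, so the whole task is to choose $M$ and $\lambda$ so that $\#(\lambda S\cap\mathcal{P}_p)\ge N^{1-\varepsilon}$. After that we simply discard elements to reach exactly $\lfloor N^{1-\varepsilon}\rfloor$.

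\textbf{Counting step.} We take $M=\lfloor p/k\rfloor$, so $S$ is AP-free mod $p$ and $\#S\ge M^{1-\eta}\gg p^{1-\eta}$. We then average over $\lambda$:
$$\sum_{\lambda\in\mathbb{F}_p^*}\#(\lambda S\cap\mathcal{P}_p)=\sum_{s\in S}\#\{\lambda:\lambda s\in\mathcal{P}_p\}=\#S\cdot N.$$
Hence some $\lambda$ satisfies $\#(\lambda S\cap\mathcal{P}_p)\ge \#S\cdot N/(p-1)\gg p^{-\eta}N$. Since $N\le p$, this quantity is at least $N^{1-2\eta}$ for large $p$, using $N\ge p^{1-o(1)}$, which follows from $\varphi(n)\gg n/\log\log n$. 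We choose $\eta=\varepsilon/3$. Then $\#(\lambda S\cap\mathcal{P}_p)\ge N^{1-\varepsilon}$ for all sufficiently large $p$, and we take $A$ to be any subset of this set of size $\lfloor N^{1-\varepsilon}\rfloor$.

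\textbf{Main obstacle.} The delicate point is the reduction modulo $p$. We must check that a nontrivial $k$-AP in $\mathbb{F}_p$ whose terms are residues of integers in $[1,M]$ comes from an integer AP. Write the terms as $a_j\equiv x_0+jd$. The second differences satisfy $a_{j+1}-2a_j+a_{j-1}\equiv0$, and these integers have absolute value less than $2M<p$, so they vanish over $\mathbb{Z}$. Thus the $a_j$ form an integer AP. Its common difference $a_1-a_0\equiv d\neq0$ is a nonzero integer, so the AP is nontrivial, which contradicts the choice of $S$. This requires $2M<p$, which holds for $M=\lfloor p/k\rfloor$ with $k\ge3$. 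The remaining ingredients are citations: the Behrend–Rankin bound for $k$-AP-free sets, and the elementary lower bound $\varphi(p-1)\gg p/\log\log p$.
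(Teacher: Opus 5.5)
Your argument is correct and is essentially the paper's proof. A Behrend-type AP-free set in a short integer interval stays AP-free modulo $p$ by the size/second-difference argument, and averaging over affine images of it gives one image containing at least $\#S\cdot\varphi(p-1)/p$ primitive elements. The paper averages over translates $B_p+x$ instead of your dilates $\lambda S$. It also uses Behrend's $3$-AP-free set in $\{1,\dots,(p-1)/2\}$ directly, since a $3$-AP-free set is automatically $k$-AP-free, so Rankin's theorem is not needed.

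One small slip: the inequality $p^{-\eta}N\ge N^{1-2\eta}$ follows from the lower bound $N\ge p^{1-o(1)}$, not from $N\le p$.
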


	\subsection{Outline of the paper} In Section 2, we will briefly introduce the relative Szemer\'edi theorem established by Conlon, Fox and Zhao. The proofs of our main results will be given in Section 3--5 respectively. 
	
	\section{The relative Szemer\'edi theorem due to Conlon, Fox and Zhao}
	\setcounter{lemma}{0}
	\setcounter{theorem}{0}
	\setcounter{equation}{0}
	\setcounter{conjecture}{0}
	\setcounter{remark}{0}
	\setcounter{corollary}{0}
	
	Let $N$ be a sufficiently large integer. For any function $f(x_1,x_2,\cdots, x_m): \left(\mathbb{Z}/N\mathbb{Z}\right)^m\rightarrow\mathbb{R}$ and any nonempty subsets $A_1,\cdots,A_m\subseteq \mathbb{Z}/N\mathbb{Z}$, let 
	$$\mathbb{E}\left[f(x_1,\cdots,x_m): x_1\in A_1,\cdots, x_m\in A_m\right]$$
	be the expectation of $f(x_1,\cdots,x_m)$ when each $x_i$ is chosen uniformly and independently at random from $A_i$. 
	
	For any positive integer $k$, the symbol $[1,k]$ denotes the set $\{1,2,\cdots, k\}$. Conlon, Fox and Zhao  \cite[Definition 2.2]{CFZ} introduce the following $k$-linear forms condition.
	
	\begin{definition}[$k$-linear forms condition]\label{Def. linear forms condition}
		Let $k$ be a positive integer and let $N$ be a sufficiently large integer. A nonnegative function $v_N: \mathbb{Z}/N\mathbb{Z}\rightarrow\mathbb{R}_{\ge 0}$ is said to obey the $k$-linear forms condition if one has 
		$$\mathbb{E}\left[\prod_{j=1}^{k}\prod_{\substack{{\bm \omega}=(\omega_i)\\ i\in[1,k]\setminus\{j\} \\ \omega_i\in \{0,1\}}}v_N\left(\sum_{i\in[1,k]\setminus\{j\}}(i-j)x_i^{(\omega_i)}\right)^{n_{j,{\bm \omega}}}: x_1^{(0)},x_1^{(1)},\cdots, x_k^{(0)},x_k^{(1)}\in\mathbb{Z}/N\mathbb{Z}\right]=1+o(1),$$
		for any choices of exponents $n_{j,{\bm \omega}}\in\{0,1\}$. 
	\end{definition}
	
	Conlon, Fox and Zhao \cite[Theorem 2.4]{CFZ} established the following result. 
	
	\begin{theorem}[Conlon, Fox and Zhao]\label{Thm. CFZ}
		For every integer $k\ge3$ and real number $\delta>0$, there exists $c>0$ such that if $v_N: \mathbb{Z}/N\mathbb{Z}\rightarrow\mathbb{R}_{\ge 0}$ satisfies the $k$-linear forms condition, $N$ is sufficiently large, and $f: \mathbb{Z}/N\mathbb{Z}\rightarrow\mathbb{R}_{\ge 0}$ satisfies $0\le f(x)\le v_N(x)$ for all $x\in \mathbb{Z}/N\mathbb{Z}$ and $\mathbb{E}[f(x): x\in \mathbb{Z}/N\mathbb{Z}]\ge \delta$, then 
		$$\mathbb{E}\left[f(x)f(x+d)\cdots f(x+(k-1)d): x, d\in\mathbb{Z}/N\mathbb{Z}\right]\ge c.$$
	\end{theorem}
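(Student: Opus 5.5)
The plan is to follow the transference strategy of Conlon, Fox and Zhao, phrased in the language of weighted hypergraphs. There are three steps: a reformulation, a dense model, and a sparse counting lemma, with the dense hypergraph removal lemma used as a black box.

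\emph{Reformulation.} First I recast the $k$-AP count as a simplex count in a weighted $k$-partite $(k-1)$-uniform hypergraph $H$.
\begin{itemize}
\item The vertex classes are $J_1,\dots,J_k$, each a copy of $\mathbb{Z}/N\mathbb{Z}$.
\item For each $j\in[1,k]$, the edge weight on $(x_i)_{i\neq j}$ is $f\bigl(\sum_{i\neq j}(i-j)x_i\bigr)$.
\end{itemize}
The $k$ forms $\sum_{i\neq j}(i-j)x_i$ take the values $a+jd$ for $j=1,\dots,k$, where $d=-\sum_i x_i$ and $a=\sum_i i x_i$. Hence the simplex density of $H$ equals $\mathbb{E}[f(x)f(x+d)\cdots f(x+(k-1)d): x,d\in\mathbb{Z}/N\mathbb{Z}]$, up to the shift $a\mapsto a+d$. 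This needs $N$ coprime to $(k-1)!$, which I would arrange by a standard embedding or by working with $N$ prime. The same construction with $v_N$ in place of $f$ gives a majorizing hypergraph $\nu$. In this language, the $k$-linear forms condition of Definition~\ref{Def. linear forms condition} says exactly this: every subgraph of the $2$-blow-up of the simplex has $\nu$-density $1+o(1)$.

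\emph{Dense model.} Next I show that $f$ is close in cut norm to a bounded function. From the linear forms condition, applied with the exponent pattern of a $2$-blow-up of a single edge, a Cauchy--Schwarz (Gowers box norm) argument gives $\|\nu-1\|_{\square}=o(1)$. I then apply the dense model theorem, proved via a Hahn--Banach/weak-regularity argument in the Green--Tao/Reingold--Trevisan--Tulsiani--Vadhan style. This produces $\tilde f$ on each edge set with $0\le\tilde f\le 1$ and $\|f-\tilde f\|_{\square}=o(1)$. In particular $\mathbb{E}\tilde f\ge\delta-o(1)$.

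\emph{Counting lemma, then removal.} The heart of the argument is a sparse counting lemma: the simplex density of the $f$-hypergraph differs from that of the $\tilde f$-hypergraph by $o(1)$.
\begin{itemize}
\item I would prove this by ``densification'', replacing one $\nu$-weighted edge at a time by a bounded one.
\item For a fixed edge, repeated Cauchy--Schwarz bounds the difference in counts by a product of terms. Each term is either $\|f-\tilde f\|_{\square}$ or a count of $\nu$ over a $2$-blow-up configuration.
\item The linear forms condition makes the latter $1+o(1)$. Telescoping over $1,\dots,k$ then gives the claim.
\end{itemize}
Finally, the simplex density of the dense hypergraph $\tilde f$ is at least $c(k,\delta)>0$. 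This follows from the dense hypergraph removal lemma by a Varnavides-type averaging, or equivalently from the weighted Szemer\'edi theorem. Transferring back gives $\mathbb{E}[f(x)\cdots f(x+(k-1)d)]\ge c/2$ for large $N$.

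\emph{Main obstacle.} I expect the sparse counting lemma to be the hardest step. The repeated Cauchy--Schwarz must be organized so that every auxiliary $\nu$-count that appears is exactly a subgraph of the $2$-blow-up of the simplex, the only configurations the linear forms condition controls. Keeping the unbounded weights $\nu$ from escaping into higher moments requires the strong inductive form of densification, in which one edge is densified at a time while the remaining edges still carry weights bounded by $\nu$.
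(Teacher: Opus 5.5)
The paper gives no proof of this theorem; it is quoted from Conlon--Fox--Zhao. Your outline follows their architecture: the simplex encoding, $\|\nu-1\|_\square=o(1)$ from the $2$-blow-up of one edge, a cut-norm dense model, and densification for the counting lemma. The gap is your last step, the claim that ``the simplex density of the dense hypergraph $\tilde f$ is at least $c(k,\delta)$.''

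\textbf{Why the last step fails.} Write $g_j(x)=f\bigl(\sum_{i\neq j}(i-j)x_i\bigr)$ for the weight on the edge class omitting $j$. The hypergraph dense model theorem gives, for each $j$ separately, some $\tilde g_j$ with values in $[0,1]$ and $\|g_j-\tilde g_j\|_\square=o(1)$. Nothing forces the $\tilde g_j$ to come from one function $\tilde f$ on $\mathbb{Z}/N\mathbb{Z}$, so the weighted Szemer\'edi theorem does not apply. Edge densities $\ge\delta-o(1)$ alone give no lower bound on simplex density. For $k=3$, take $\tilde g_3=1_{A\times J_2}$, $\tilde g_2=1_{(J_1\setminus A)\times J_3}$, $\tilde g_1\equiv 1$ with $\#A=\lfloor N/2\rfloor$: all densities are about $1/2$ and there is no triangle.

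A removal/Varnavides argument needs many edge-disjoint simplices that carry weight. For $g$ these are the diagonal simplices $x_1+\cdots+x_k=0$, i.e.\ the $d=0$ progressions:
\begin{itemize}
\item every edge lies in exactly one of them;
\item all $k$ edges of such a simplex carry weight $f(a)$ with $a=\sum_i ix_i$;
\item their average weight is $\mathbb{E}f\ge\delta$.
\end{itemize}
But this set has measure $1/N$ in $(\mathbb{Z}/N\mathbb{Z})^k$, and cut-norm closeness says nothing about $\tilde g$ there. Building a single $\tilde f$ on $\mathbb{Z}/N\mathbb{Z}$ instead would require $\nu$ to be pseudorandom against products of boundedly many dual functions. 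That is essentially the Green--Tao correlation condition this theorem is designed to avoid.

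\textbf{What is missing.} You need a relative removal lemma, which is how Conlon--Fox--Zhao conclude.
\begin{enumerate}
\item Suppose the $k$-AP density of $f$ is at most $c$. By your counting lemma, the simplex density of $\tilde g$ is at most $c+o(1)$.
\item Delete edges to destroy every simplex, so that the deleted sets are measurable with respect to a $\sigma$-algebra generated by boundedly many cylinder sets. For instance, take $\tilde g_j$ to be a conditional expectation of $g_j$ from a sparse weak regularity lemma, and use a form of the dense removal lemma whose removed sets have bounded complexity.
\item Then the deleted $g_j$-weight is at most the deleted $\tilde g_j$-weight plus $O_\epsilon(\|g_j-\tilde g_j\|_\square)$, hence at most $\epsilon+o(1)$.
\item Every diagonal simplex loses an edge, so $\delta\le\mathbb{E}f\le k(\epsilon+o(1))$. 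This is a contradiction once $\epsilon<\delta/k$.
\end{enumerate}
Transferring the smallness of the deleted weight from the bounded $\tilde g$ back to the unbounded $g$ is exactly the step your plan skips.

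A minor point: you need no coprimality assumption. The map $(x_1,\dots,x_k)\mapsto(\sum_i ix_i,\sum_i x_i)$ is onto $(\mathbb{Z}/N\mathbb{Z})^2$ for every $N$. Restricting to prime $N$ would not prove the statement as given.
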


	\section{Proofs of Theorem \ref{Thm. A}}
	\setcounter{lemma}{0}
	\setcounter{theorem}{0}
	\setcounter{equation}{0}
	\setcounter{conjecture}{0}
	\setcounter{remark}{0}
	\setcounter{corollary}{0}
	
	Recall that $\mathcal{P}_p$ is the set of all primitive elements of $\mathbb{F}_p$. We begin with the following known result on the characteristic function of $\mathcal{P}_p$ (cf. \cite{Cohen15,Cohen21}).
	
	\begin{lemma}\label{Lem. characteristic function of P}
		Let $p$ be a prime. Then 
		$$1_{\mathcal{P}_p}(x)=\theta_{p-1}\sum_{d\mid p-1}\frac{\mu(d)}{\varphi(d)}\sum_{\substack{\chi\in\widehat{\mathbb{F}_p^*} \\ \ord(\chi)=d}} \chi(x)=\theta_{p-1}\sum_{\chi\in\widehat{\mathbb{F}_p^*}}c_{\chi}\cdot \chi(x)
		=\begin{cases}
			1  & \mbox{if}\ x\in\mathcal{P}_p,\\
			0 & \mbox{otherwise},
		\end{cases}$$
		where $\mu$ is the M\"obius function, $\varphi$ is the Euler totient function, $\theta_{p-1}=\frac{\varphi(p-1)}{(p-1)}$, and $c_{\chi}=\frac{\mu(\ord(\chi))}{\varphi(\ord(\chi))}$ for any $\chi\in\widehat{\mathbb{F}_p^*}$. 
	\end{lemma}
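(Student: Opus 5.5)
We prove the identity by working in the cyclic group $\mathbb{F}_p^*$ and using orthogonality of characters on its subgroups. Set $n=p-1$ and fix a generator $g$ of $\mathbb{F}_p^*$. For $x=0$ every character vanishes by the convention $\chi(0)=0$, and $0\notin\mathcal{P}_p$, so both sides are $0$. Hence we may assume $x\in\mathbb{F}_p^*$ and write $x=g^a$ with $0\le a<n$. Then $x$ is primitive if and only if $\gcd(a,n)=1$, equivalently if and only if $x$ is not an $r$-th power in $\mathbb{F}_p^*$ for any prime $r\mid n$.

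The first step is the Möbius inversion. Since $\sum_{d\mid \gcd(a,n)}\mu(d)$ equals $1$ if $\gcd(a,n)=1$ and $0$ otherwise, we get
$$1_{\mathcal{P}_p}(x)=\sum_{d\mid n}\mu(d)\,1_{\{x\ \text{is a $d$-th power}\}}.$$
Here we use that $d\mid a$ is equivalent to $x$ lying in the subgroup $(\mathbb{F}_p^*)^d$ of index $d$, which holds because $d\mid n$.

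The second step is to detect $d$-th powers with characters. The characters trivial on $(\mathbb{F}_p^*)^d$ are exactly those whose order divides $d$, and there are $d$ of them. By orthogonality,
$$1_{\{x\ \text{is a $d$-th power}\}}=\frac1d\sum_{e\mid d}\ \sum_{\ord(\chi)=e}\chi(x).$$
Substituting and swapping the sums, the coefficient of a character $\chi$ of order $e$ is
$$\sum_{e\mid d\mid n}\frac{\mu(d)}{d}.$$

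The third step, which is the only real computation, evaluates this coefficient. Write $d=em$ with $m\mid n/e$. The term $\mu(em)$ vanishes unless $\gcd(e,m)=1$, and in that case $\mu(em)=\mu(e)\mu(m)$. Hence the sum is zero unless $e$ is squarefree, consistent with $\mu(e)=0$ otherwise, and for squarefree $e$ it equals
$$\frac{\mu(e)}{e}\sum_{\substack{m\mid n/e\\ \gcd(m,e)=1}}\frac{\mu(m)}{m}=\frac{\mu(e)}{e}\prod_{\substack{q\mid n\\ q\nmid e}}\Bigl(1-\frac1q\Bigr),$$
where $q$ runs over primes. Since $e$ is squarefree, this equals
$$\frac{\mu(e)}{e}\cdot\frac{\varphi(n)/n}{\prod_{q\mid e}(1-1/q)}=\frac{\varphi(n)}{n}\cdot\frac{\mu(e)}{\varphi(e)}.$$
This is exactly $\theta_{p-1}\,c_\chi$, which gives both stated forms of the formula. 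The main point requiring care is this multiplicative bookkeeping over squarefree $e$, together with the remark that non-squarefree orders contribute $0$ on both sides.
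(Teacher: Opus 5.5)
Your proof is correct and complete. The M\"obius step, the detection of $d$-th powers by the $d$ characters of order dividing $d$, the interchange of sums, and the evaluation $\sum_{e\mid d\mid n}\mu(d)/d=\frac{\varphi(n)}{n}\cdot\frac{\mu(e)}{\varphi(e)}$ all check out. The evaluation uses the one point you leave implicit: the primes dividing $n/e$ but not $e$ are exactly the primes dividing $n$ but not $e$. The case $x=0$ is handled by the convention $\chi(0)=0$.

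The paper gives no proof of this lemma at all. It quotes the formula as known from the work of Cohen and his collaborators, so your derivation supplies exactly what the paper takes for granted.

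A common alternative derivation factors over the primes $q\mid n$. For $x\in\mathbb{F}_p^*$,
\[
1_{\mathcal{P}_p}(x)=\prod_{q\mid n}\Bigl(1-\frac1q\sum_{\chi^q=\chi_0}\chi(x)\Bigr)=\prod_{q\mid n}\frac{q-1}{q}\Bigl(1+\frac{\mu(q)}{\varphi(q)}\sum_{\ord(\chi)=q}\chi(x)\Bigr).
\]
Expanding the product gives the formula directly, using that for squarefree $d$ the products $\prod_{q\mid d}\chi_q$ with $\ord(\chi_q)=q$ run bijectively over the characters of order $d$.

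The two routes trade off as follows. The factorization explains from the outset why only squarefree orders appear, and it replaces your Step 3 computation with the structure of the cyclic character group. Your route is a direct M\"obius inversion that needs only orthogonality on the subgroups $(\mathbb{F}_p^*)^d$, at the cost of the divisor-sum bookkeeping.
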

	
	We now define the function 
	\begin{equation}\label{Eq. definition of vp}
		v_p=\frac{1}{\theta_{p-1}}\cdot 1_{\mathcal{P}_p}: \mathbb{F}_p\rightarrow \mathbb{R}_{\ge 0}.
	\end{equation}
	
	Our next task is to show that $v_p$ satisfies the $k$-linear forms condition in Definition \ref{Def. linear forms condition}. To accomplish this task, we first introduce the well-known Weil theorem (cf. \cite[Theorem 5.41]{LN}).
	
	\begin{lemma}\label{Lem. the Weil Bound}
		Let $\chi\in\widehat{\mathbb{F}_p^*}$ with $\ord(\chi)=d>1$, and let $f(t)\in\mathbb{F}_p[t]$ be a monic polynomial with $f(t)\neq g(t)^d$ for any $g(t)\in\mathbb{F}_p[t]$. Then, for any $a\in\mathbb{F}_p$ we have 
		$$\left|\sum_{x\in\mathbb{F}_p}\chi(af(x))\right|\le (r-1)\sqrt{p},$$
		where $r$ is the number of distinct roots of $f(t)$ in an algebraic closure $\overline{\mathbb{F}_p}$ of $\mathbb{F}_p$.
	\end{lemma}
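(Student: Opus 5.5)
This lemma is the classical Weil bound for multiplicative character sums with polynomial arguments. In the paper it is simply quoted from \cite[Theorem 5.41]{LN}. If I had to prove it, I would follow the standard route through $L$-functions and the Riemann hypothesis for curves over finite fields.

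\emph{Step 1: reductions.} If $a=0$, every term is $\chi(0)=0$, so the bound is trivial. If $a\neq0$, then $\chi(af(x))=\chi(a)\chi(f(x))$ and $|\chi(a)|=1$, so it suffices to treat $a=1$. Factor $f(t)=\prod_{i=1}^{r}(t-\alpha_i)^{e_i}$ over $\overline{\mathbb{F}_p}$. Lowering any $e_i$ by multiples of $d$ changes $f$ by a $d$-th power in $\mathbb{F}_p[t]$: the Frobenius permutes the $\alpha_i$ while preserving the $e_i$, so the removed factor is Galois-stable and has coefficients in $\mathbb{F}_p$. This changes $\chi(f(x))$ only at roots of $f$ lying in $\mathbb{F}_p$, where $\chi(f(x))$ is $0$ before and after, since a root of the original $f$ is a root of the modified one and vice versa unless its exponent became $0$. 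The points where the value changes must therefore be handled by a small correction. The cleaner option is to work directly with the exponents $e_i\bmod d$. The hypothesis that $f$ is not a $d$-th power guarantees that not all $e_i\equiv0\pmod d$; the same Galois-invariance argument shows the hypothesis transfers from $\mathbb{F}_p[t]$ to $\overline{\mathbb{F}_p}[t]$.

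\emph{Step 2: the $L$-function.} For $n\ge1$, set $S_n=\sum_{x\in\mathbb{F}_{p^n}}\chi(N_{\mathbb{F}_{p^n}/\mathbb{F}_p}(f(x)))$ and $L(T)=\exp\bigl(\sum_{n\ge1}S_nT^n/n\bigr)$. Expanding via monic polynomials, $L(T)=\sum_{g\ \mathrm{monic}}\lambda(g)T^{\deg g}$. Here $\lambda(g)=\chi(\mathrm{Res}(g,f))$ up to sign, which is a multiplicative character on $\mathbb{F}_p[t]$ modulo $\prod(t-\alpha_i)$. Orthogonality shows that the sum over monic $g$ of a fixed degree $m\ge r$ vanishes, since $\lambda$ is a nontrivial character of the group $(\mathbb{F}_p[t]/(F))^*$ with $F$ the radical of $f$. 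Hence $L(T)$ is a polynomial of degree at most $r-1$. Writing $L(T)=\prod_{j=1}^{r-1}(1-\omega_jT)$, with some $\omega_j$ possibly $0$, we get $S_1=-\sum_j\omega_j$.

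\emph{Step 3: size of the roots (main obstacle).} It remains to show $|\omega_j|\le\sqrt p$. This is the genuinely deep input. I would identify $L(T)$ as a factor of the numerator of the zeta function of the smooth projective model of the Kummer curve $y^d=f(x)$. The zeta function splits over the characters $\chi^j$, $0\le j<d$, according to the $\mu_d$-action on $y$. Weil's Riemann hypothesis for curves then gives $|\omega_j|=\sqrt p$ for the nonzero roots. As an alternative avoiding algebraic geometry, I would try Stepanov's polynomial method, as developed by Schmidt. It gives $|S_n|\ll p^{n/2}$ for all $n$, and the rationality of $L(T)$ from Step 2 bootstraps this to $|\omega_j|\le\sqrt p$: if some $|\omega_j|>\sqrt p$, then $\sum_j\omega_j^n$ would eventually exceed $Cp^{n/2}$ along a subsequence of $n$. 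Combining Steps 2 and 3 yields $|S_1|\le(r-1)\sqrt p$, as claimed.
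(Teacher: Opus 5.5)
The paper gives no proof of this lemma; it quotes it from \cite[Theorem 5.41]{LN} and uses it as a black box. Your outline is the standard proof behind that citation, and its overall structure is sound. After reducing to $a=1$, the affine $L$-function $L(T)=\sum_{g\ \mathrm{monic}}\lambda(g)T^{\deg g}$ is a polynomial of degree at most $r-1$, which is where the factor $r-1$ comes from. A Riemann-hypothesis input then bounds its inverse roots by $\sqrt p$. The citation buys the author brevity; your sketch shows where each part of the bound comes from and that the input is genuinely deep.

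In Step 2 you should also justify that the character is nontrivial. Take an irreducible factor $P$ of the radical $F$ whose roots occur in $f$ with multiplicity $e\not\equiv0\pmod d$, and choose $g\equiv1\pmod{F/P}$ and $g\equiv\gamma\pmod P$. Up to the constant $\chi(-1)^{\deg g\cdot\deg f}$, one has $\lambda(g)=\chi(N(\gamma))^{e}$, where $N$ is the norm from $\mathbb{F}_p[t]/(P)$ to $\mathbb{F}_p$. Since $N$ is onto $\mathbb{F}_p^*$ and $\ord(\chi)=d\nmid e$, this value is $\neq1$ for suitable $\gamma$.

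The Weil version of Step 3, however, is misstated. It is false in general that $L(T)$ divides the numerator of the zeta function of the smooth model of $y^d=f(x)$. It is also false that every nonzero $\omega_j$ has modulus $\sqrt p$. Two counterexamples:
\begin{itemize}
\item For $d=2$ and $f=t^2-1$ one has $S_1=-1$, so $L(T)=1-T$, although the conic $y^2=x^2-1$ has genus $0$.
\item For $f=(t-a)^d(t-b)$ with $a\neq b$ one has $S_1=-\chi(a-b)$, so $L(T)=1-\chi(a-b)T$, although $y^d=(x-a)^d(x-b)$ is birational to $u^d=x-b$ and hence has genus $0$.
\end{itemize}

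The correct statement is this: $L(T)$ equals the $\chi$-isotypic factor of the zeta numerator (the complete Artin $L$-function), multiplied by Euler factors $1-\varepsilon_PT^{\deg P}$ with $|\varepsilon_P|=1$. These extra factors sit at the places your affine sum omits but where the $\chi$-component is unramified. Those are the roots $\alpha_i$ with $d\mid e_i$ (the phenomenon you ran into in Step 1), and $\infty$ when $d\mid\deg f$.

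There is a second point. If $m=\gcd(d,e_1,\dots,e_r)>1$, the curve $y^d=f(x)$ is geometrically reducible. You must first write $f=h^m$ with $h\in\mathbb{F}_p[t]$ monic and replace $(\chi,f)$ by $(\chi^m,h)$.

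After these repairs you obtain $|\omega_j|\le\sqrt p$ rather than equality, which is all the final step needs. Your Stepanov--Schmidt alternative with the bootstrapping argument only ever claims the inequality, so it is unaffected. The repair matters for this paper. In the proof of Lemma \ref{Lem. estimates for nontrival character sums}, the lemma is applied to $f_{\w}$ whose exponents may equal $l$, which is exactly the case $d\mid e_i$.
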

	
	Alon \cite[Theorem 1.2]{A} obtained the following useful result, which is known as the Combinatorial Nullstellensatz.
	
	\begin{lemma}\label{Lem. Alon}
		Let $F$ be an arbitrary field, and let $f(x_1,\cdots,x_m)\in F[x_1,\cdots,x_m]$. Suppose $\deg(f)=t_1+t_2+\cdots+t_m$, where each $t_i$ is a nonnegative integer, and suppose that the coefficient of $x_1^{t_1}\cdots x_m^{t_m}$ in $f$ is nonzero. Then, if $S_1,\cdots,S_m$ are subsets of $F$ with $\#S_i>t_i$, there are $s_1\in S_1, s_2\in S_2,\cdots, s_m\in S_m$ so that 
		$$f(s_1,s_2,\cdots,s_m)\neq 0.$$
	\end{lemma}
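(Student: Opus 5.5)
The plan is to follow Alon's original argument. It has two stages: a vanishing lemma for polynomials of low individual degree on a grid, and a reduction of $f$ modulo the univariate polynomials vanishing on the sets $S_i$.

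First, we may shrink each $S_i$ and assume $\#S_i=t_i+1$. \textbf{Step 1} is the auxiliary claim: if $P\in F[x_1,\dots,x_m]$ has $\deg_{x_i}P\le t_i$ for every $i$ and $P$ vanishes on $S_1\times\cdots\times S_m$, then $P=0$. I would prove this by induction on $m$.
\begin{itemize}
\item For $m=1$, a nonzero univariate polynomial of degree at most $t_1$ has at most $t_1$ roots, fewer than $\#S_1$.
\item For the inductive step, write $P=\sum_{j=0}^{t_m}P_j(x_1,\dots,x_{m-1})x_m^j$. Fix any $(s_1,\dots,s_{m-1})$ in the smaller grid. The univariate polynomial in $x_m$ then vanishes on $S_m$, so all its coefficients $P_j(s_1,\dots,s_{m-1})$ vanish.
\item By induction each $P_j=0$, hence $P=0$.
\end{itemize}

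\textbf{Step 2} is the reduction. Put $g_i(x_i)=\prod_{s\in S_i}(x_i-s)=x_i^{t_i+1}-\sum_{j\le t_i}g_{ij}x_i^j$. On $S_i$ we have $x_i^{t_i+1}=\sum_{j\le t_i}g_{ij}x_i^j$.

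Suppose, for contradiction, that $f$ vanishes on the whole grid. Repeatedly replace every occurrence of $x_i^{t_i+1}$ in a monomial of $f$ by $\sum_j g_{ij}x_i^j$. This process terminates and produces $\bar f$ with $\deg_{x_i}\bar f\le t_i$ for all $i$. Moreover $\bar f$ agrees with $f$ on the grid, and
$$f-\bar f=\sum_i h_ig_i\quad\text{with } \deg h_i\le \deg f-\deg g_i.$$
By Step 1, $\bar f=0$.

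\textbf{Step 3} derives the contradiction by comparing coefficients of $x_1^{t_1}\cdots x_m^{t_m}$ in $f=\sum_i h_ig_i$.
\begin{itemize}
\item The monomial $x_1^{t_1}\cdots x_m^{t_m}$ has total degree $\deg f$.
\item Since $\deg h_i\le \deg f-(t_i+1)$, the only monomials of degree $\deg f$ in $h_ig_i$ come from (top part of $h_i$)$\cdot x_i^{t_i+1}$. Every such monomial has exponent at least $t_i+1$ in $x_i$.
\item Hence none of them equals $x_1^{t_1}\cdots x_m^{t_m}$, so that coefficient on the right-hand side is $0$.
\item This contradicts the hypothesis that the coefficient of $x_1^{t_1}\cdots x_m^{t_m}$ in $f$ is nonzero.
\end{itemize}
Therefore some $(s_1,\dots,s_m)\in S_1\times\cdots\times S_m$ has $f(s_1,\dots,s_m)\ne0$.

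The only delicate point is the degree bookkeeping in Step 2. Each single replacement step strictly lowers total degree, and the reduction amounts to division by the $g_i$, so the degree bound on $h_i$ holds. Everything else is routine.
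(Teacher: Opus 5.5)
Your proof is correct: the grid vanishing lemma, the reduction modulo $g_i=\prod_{s\in S_i}(x_i-s)$ with $\deg h_i\le \deg f-\deg g_i$, and the comparison of coefficients of $x_1^{t_1}\cdots x_m^{t_m}$ all go through as you describe. The paper gives no proof of its own and simply quotes the result from Alon; your argument is exactly Alon's original proof, with his auxiliary vanishing/reduction result folded directly into the coefficient comparison.
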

	
	Using Lemmas \ref{Lem. the Weil Bound}--\ref{Lem. Alon}, we can obtain the following result, which will play an important role in verifying that $v_p$ satisfies the $k$-linear forms condition.
	
	\begin{lemma}\label{Lem. estimates for nontrival character sums}
		Let $m,s\ge2$ be integers, and let $L_i(x_1,x_2,\cdots,x_m)\in\mathbb{Z}[x_1,x_2,\cdots,x_m]$ with $\deg(L_i)=1$ for any $1\le i\le s$. Suppose  that $L_i$ and $L_j$ are not proportional over $\mathbb{Q}$ for any $1\le i\neq j\le s$. Then, for every sufficiently large prime $p$ we have 
		\begin{equation}\label{Eq. inequality in Lemma estimates for nontrival character sums}
			\left|\sum_{\x\in\mathbb{F}_p^m}\prod_{i=1}^{s}\chi_i(L_i(\x))\right|\le (s-1)\cdot p^{m-\frac{1}{2}}+\frac{s(s-1)}{2}\cdot p^{m-1},
		\end{equation}
		where $\chi_1,\chi_2,\cdots,\chi_s\in\widehat{\mathbb{F}_p^*}$ are not all trivial. 
	\end{lemma}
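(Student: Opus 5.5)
We reduce to a one-variable Weil sum by slicing $\mathbb{F}_p^m$ along lines in a well-chosen direction. First discard the indices with $\chi_i$ trivial. Since $\chi_0(0)=0$, such a factor $\chi_0(L_i(\x))$ only records whether $L_i(\x)\neq0$. So we write $\chi_0(L_i(\x))=1-1_{\{L_i(\x)=0\}}$ and account for the hyperplane corrections at the end. Alternatively, we can keep all $s$ forms and choose the slicing direction to avoid all of them.

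Write $L_i(\x)=\ell_i(\x)+b_i$ with $\ell_i$ the homogeneous linear part. For $p$ large, the forms $L_i$ remain pairwise non-proportional mod $p$, and no nonconstant $\ell_i$ vanishes mod $p$; both conditions fail for only finitely many $p$. Here "sufficiently large" enters.

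\textbf{Choice of direction.} Pick $\v\in\mathbb{F}_p^m$ such that:
\begin{itemize}
\item[(a)] $\ell_i(\v)\neq0$ for every $i$ with $\ell_i\ne0$;
\item[(b)] the roots of the various $L_i$ along each line $\x=\w+t\v$ are as distinct as possible.
\end{itemize}
For (a) we apply Lemma \ref{Lem. Alon} to the polynomial $\prod_i\ell_i(\x)$. It is nonzero of degree at most $s$, so some monomial of top degree has nonzero coefficient with all exponents at most $s<p$, and taking $S_j=\mathbb{F}_p$ yields such a $\v$.

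\textbf{Slicing.} Partition $\mathbb{F}_p^m$ into the $p^{m-1}$ lines $\{\w+t\v:t\in\mathbb{F}_p\}$, with $\w$ ranging over a complement $W$ of $\mathbb{F}_p\v$. On each line,
$$L_i(\w+t\v)=\ell_i(\v)\Bigl(t+\frac{L_i(\w)}{\ell_i(\v)}\Bigr)=\ell_i(\v)\,(t-\alpha_i(\w)),$$
so the inner sum equals $\chi(c)\sum_t\prod_i\chi_i(t-\alpha_i(\w))$ with $|\chi(c)|=1$. Fix a generator $\psi$ of $\widehat{\mathbb{F}_p^*}$ and write $\chi_i=\psi^{e_i}$. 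The inner sum then becomes $\sum_t\psi(f_\w(t))$ with $f_\w(t)=\prod_i(t-\alpha_i(\w))^{e_i}$ (taking $0\le e_i<p-1$). Lemma \ref{Lem. the Weil Bound} bounds it by $(s-1)\sqrt p$, provided $f_\w$ is not a $(p-1)$-th power times a constant.

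\textbf{Bad lines.} The power condition holds whenever the roots $\alpha_i(\w)$ attached to nontrivial $\chi_i$ include one not shared with any other index. It fails only when coincidences $\alpha_i(\w)=\alpha_j(\w)$ occur. The condition $\alpha_i(\w)=\alpha_j(\w)$ is a linear equation in $\w$:
$$\ell_j(\v)L_i(\w)-\ell_i(\v)L_j(\w)=0.$$
Because $L_i,L_j$ are non-proportional, this equation is nontrivial for suitably chosen $\v$, so it defines a hyperplane (or the empty set) in $W$ of size at most $p^{m-2}$. There are at most $\binom{s}{2}$ pairs, so the bad $\w$ number at most $\frac{s(s-1)}{2}p^{m-2}$. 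On each bad line we use the trivial bound $p$, which contributes at most $\frac{s(s-1)}{2}p^{m-1}$. The good lines contribute at most $p^{m-1}(s-1)\sqrt p$. Summing gives \eqref{Eq. inequality in Lemma estimates for nontrival character sums}.

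\textbf{Main obstacle.} The delicate point is arranging that on all lines outside those $\binom s2$ hyperplanes, some nontrivial $\chi_i$ has a root of multiplicity one not cancelled by another factor, with a single $\v$ working for all pairs. First, we strengthen the choice of $\v$ via Lemma \ref{Lem. Alon}, applied to $\prod_i\ell_i(\x)\cdot\prod_{i<j}\Delta_{ij}(\x)$ for suitable auxiliary linear forms $\Delta_{ij}$, so that each coincidence equation is a genuine hyperplane. Second, we handle separately the case where some $\ell_i=0$, i.e. $L_i$ is a nonzero constant. Such a factor is a harmless unimodular constant, and we reduce to the remaining forms, noting that if only constant forms carry nontrivial characters then the sum still needs separate treatment. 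Third, the trivial characters are covered because $\chi_0$ factors only remove roots and never destroy the nontrivial root of multiplicity one.
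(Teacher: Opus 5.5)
Your route is the same as the paper's. You choose $\v$ with $\ell_i(\v)\neq0$ for all $i$, slice $\mathbb{F}_p^m=W\oplus\mathbb{F}_p\v$ into lines, and write $L_i(\w+t\v)=\ell_i(\v)(t-\alpha_i(\w))$. You then apply Lemma~\ref{Lem. the Weil Bound} on lines where the $\alpha_i(\w)$ are pairwise distinct, and bound the at most $\binom{s}{2}p^{m-2}$ remaining lines trivially. Getting $\v$ from Lemma~\ref{Lem. Alon} instead of the union bound $sp^{m-1}<p^m$, and using a generator $\psi$ instead of a character of order $\lcm\{\ord(\chi_i)\}$, are immaterial differences.

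As written, however, the proof is incomplete exactly at the step you call the main obstacle. The bound $\#\{\w\in W:\alpha_i(\w)=\alpha_j(\w)\}\le p^{m-2}$ is justified only by ``for suitably chosen $\v$'' and by auxiliary forms $\Delta_{ij}$ that you never specify. The missing observation is that nothing beyond (a) is needed:
\begin{itemize}
\item Put $c_i=\ell_i(\v)\neq0$ and $F_{ij}=c_jL_i-c_iL_j$. Since $L_i,L_j$ are non-proportional mod $p$, $F_{ij}$ is a nonzero affine form.
\item Its linear part $c_j\ell_i-c_i\ell_j$ vanishes at $\v$, so $F_{ij}(\w+t\v)=F_{ij}(\w)$.
\item Hence if $F_{ij}$ vanished on $W$, it would vanish on all of $W\oplus\mathbb{F}_p\v=\mathbb{F}_p^m$, which a nonzero affine form cannot do.
\item So its zero set in $W$, which is exactly the coincidence set, is empty or an affine hyperplane of $W$.
\end{itemize}
This is the paper's argument, and it holds for every $\v$ satisfying (a). When $\ell_i,\ell_j$ are proportional, $F_{ij}$ is a nonzero constant for every $\v$. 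So your plan to make each coincidence equation a ``genuine hyperplane'' cannot be carried out there, but it is also unnecessary, since the empty set serves just as well.

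There are two smaller loose ends.

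First, constant $L_i$ are excluded by the hypothesis $\deg(L_i)=1$, which survives reduction mod $p$ for large $p$. They must be excluded: if $L_1$ is a nonzero constant, $\chi_1\neq\chi_0$, and all other $\chi_i$ are trivial, the sum has modulus at least $p^m-(s-1)p^{m-1}$. So the ``separate treatment'' you leave open could not succeed.

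Second, with $0\le e_i<p-1$, identifying the inner sum with $\chi(c)\sum_t\psi(f_\w(t))$ is false when some $e_i=0$. The factor $\chi_0(\cdot)$ vanishes at $t=\alpha_i(\w)$, whereas $(t-\alpha_i(\w))^0=1$. Take instead $e_i\in\{1,\dots,p-1\}$; the paper uses $\psi$ of order $l$ and $e_i\in[1,l]$. Then a trivial $\chi_i$ contributes a factor vanishing at the right point. On a good line $f_\w$ has exactly $s$ distinct roots, one of multiplicity not divisible by the order of $\psi$, and Lemma~\ref{Lem. the Weil Bound} gives $(s-1)\sqrt p$ directly. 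Alternatively, let $N$ and $T$ index the nontrivial and trivial characters. Drop the trivial factors on each line, at a cost of at most $|T|$ terms, and use $(|N|-1)\sqrt p+|T|\le(s-1)\sqrt p$.
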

	
	\begin{proof}
		For each integer $1\le i\le s$, let 
		$$L_i(\x)=a_{i1}x_1+a_{i2}x_2+\cdots+a_{im}x_m+b_i=\a_i\cdot \x+b_i,$$
		where $\a_i=(a_{i1},a_{i2},\cdots,a_{im})\neq \0$, and let $\widetilde{\a_i}=(a_{i1},a_{i2},\cdots,a_{im},b_i)$. 
		
		For any prime $p$, we may view $L_1, L_2, \cdots,L_s$ as polynomials over $\mathbb{F}_p$. We first show that for sufficiently large $p$, when viewed as polynomials over $\mathbb{F}_p$, we still have $\deg(L_i)=1$ for any $1\le i\le s$ and $L_i$ and $L_j$ are not proportional over $\mathbb{F}_p$ for any $1\le i\neq j\le s$. In fact, given any $1\le i\neq j\le s$, since $L_i$ and $L_j$ are not proportional over $\mathbb{Q}$, the vectors $\widetilde{\a_i}$ and $\widetilde{\a_j}$ are linearly independent over $\mathbb{Q}$. From this, the rank of the matrix $\begin{bmatrix}
			\widetilde{\a_i}\\
			\widetilde{\a_j}
		\end{bmatrix}$ is equal to $2$, and hence this matrix
		has a nonzero $2\times 2$ minor $D_{ij}$. Applying this and noting that 
		$$d_i=\max\left\{|a_{ij}|: 1\le j\le m\right\}>0$$
		for each $1\le i\le s$, 
		 when 
		$$p>p_0=\max\left\{s+1, \prod_{1\le i<j\le s}|D_{ij}|\cdot \prod_{1\le i\le s}d_i\right\},$$
		it is clear that $\deg(L_i)=1$ for any $1\le i\le s$ and $L_i$ and $L_j$ are not proportional over $\mathbb{F}_p$ for any $1\le i\neq j\le s$.
		
		We next show that for any $p>p_0$, there exists a vector $\v\in\mathbb{F}_p^m$ such that $\v\cdot \a_i\neq 0$ for any $1\le i\le s$. In fact, since $p>p_0$, the vectors $\a_i\in\mathbb{F}_p^m\setminus\{\0\}$ for each $1\le i\le s$. Thus, 
		$$\left\{\v\in\mathbb{F}_p^m: \v\cdot \a_i=0\right\}$$
		is a subspace of $\mathbb{F}_p^m$ with dimension $m-1$ for any $1\le i\le s$. From this and noting that $p>p_0>s$, one can verify that 
		\begin{align*}
			\#\left\{\v\in\mathbb{F}_p^m: \v\cdot \a_i=0\ \text{for some $1\le i\le s$}\right\}
&=\#\bigcup_{i=1}^s\left\{\v\in\mathbb{F}_p^m: \v\cdot \a_i=0\right\}\\
&\le \sum_{i=1}^s\#\left\{\v\in\mathbb{F}_p^m: \v\cdot \a_i=0\right\}\\
&=sp^{m-1}\\
&<p^m.
		\end{align*}
		Hence there exists a vector $\v\in\mathbb{F}_p^m$ such that $\v\cdot \a_i\neq 0$ for any $1\le i\le s$ whenever $p>p_0$. 
		
		From now on, we assume $p>p_0$. Since $m\ge2$, there is a subspace $W$ of $\mathbb{F}_p^m$ with $\dim(W)=m-1$ such that 
		$$\mathbb{F}_p^m=W\bigoplus\mathbb{F}_p\v.$$
		This implies that every vector $\x\in\mathbb{F}_p^m$ can be uniquely written as 
		$$\x=\w+t\v \quad (\w\in W, t\in\mathbb{F}_p).$$
		Hence, for any $\x=\w+t\v\in\mathbb{F}_p^m$ with $\w\in W$ and $t\in\mathbb{F}_p$ and any integer $1\le i\le s$, letting 
		\begin{equation}\label{Eq. definition of ci}
			c_i=\v\cdot\a_i\neq 0,
		\end{equation}
		one can verify that 
		\begin{align}\label{Eq. expression of Li}
			L_i(\x)=L_i(\w+t\v)
&=\a_i\cdot (\w+t\v)+b_i\notag\\
&=L_i(\w)+c_it\notag\\
&=c_i(t-\alpha_i(\w)),
		\end{align}
where 
      \begin{equation}\label{Eq. definition of alpha i}
	\alpha_i(\w)=-\frac{L_i(\w)}{c_i}.
       \end{equation}

Define 
$$W_{\bad}=\left\{\w\in W: \alpha_i(\w)=\alpha_j(\w)\ \text{for some $1\le i\neq j\le s$}\right\}$$
and let $W_{\good}=W\setminus W_{\bad}$. We next obtain an upper bound for $\# W_{\bad}$. For any $1\le i\neq j\le s$, let the polynomial
$$L_{ij}=c_jL_i-c_iL_j.$$
Since $p>p_0$, by the above discussions, the polynomials $L_i$ and $L_j$ are not proportional over $\mathbb{F}_p$. From this and recalling that  $c_i,c_j\neq 0$ are defined by (\ref{Eq. definition of ci}), we immediately obtain $L_{ij}\neq 0$. On the other hand, for any $\x=\w+t\v\in\mathbb{F}_p^m$ with $\w\in W$ and $t\in\mathbb{F}_p$, applying (\ref{Eq. expression of Li}) and (\ref{Eq. definition of alpha i}) one can verify that 
\begin{align}\label{Eq. Lij is independent of tv}
	L_{ij}(\x)=L_{ij}(\w+t\v)
&=c_jL_i(\w+t\v)-c_iL_j(\w+t\v)\notag\\
&=c_j\left(c_i(t-\alpha_i(\w))\right)-c_i\left(c_j(t-\alpha_j(\w))\right)\notag\\
&=-c_jc_i\alpha_i(\w)+c_ic_j\alpha_j(\w)\notag\\
&=c_jL_i(\w)-c_iL_j(\w)\notag\\
&=L_{ij}(\w).
\end{align}
We now claim that there is at least one $\w\in W$ such that $L_{ij}(\w)\neq 0$. Suppose, to the contrary, that $L_{ij}(\w)=0$ for any $\w\in W$. Then (\ref{Eq. Lij is independent of tv}) implies that $L_{ij}(\x)=0$ for any $\x\in\mathbb{F}_p^m$. Thus, applying Lemma \ref{Lem. Alon} with $F=\mathbb{F}_p$ and $S_1=\cdots=S_m=\mathbb{F}_p$, we obtain $L_{ij}=0$, which contradicts the fact that $L_{ij}\neq 0$. By (\ref{Eq. Lij is independent of tv}) and the above discussions, 
$$\left\{\w\in W: L_{ij}(\w)=0\right\}\subsetneqq W$$
is either $\emptyset$ or a proper affine hyperplane in $W$. From this and noting that 
$$\left\{\w\in W: \alpha_i(\w)=\alpha_j(\w)\right\}=\left\{\w\in W: L_{ij}(\w)=0\right\},$$
we obtain 
$$\#\left\{\w\in W: \alpha_i(\w)=\alpha_j(\w)\right\}\le p^{\dim(W)-1}=p^{m-2}.$$
Hence 
\begin{align}\label{Eq. upper bound for W bad}
	\#W_{\bad}
&=\#\left\{\w\in W: \alpha_i(\w)=\alpha_j(\w)\ \text{for some $1\le i\neq j\le s$}\right\}\notag\\
&=\#\bigcup_{1\le i<j\le s}\left\{\w\in W: \alpha_i(\w)=\alpha_j(\w)\right\}\notag\\
&\le \sum_{1\le i<j\le s}\#\left\{\w\in W: \alpha_i(\w)=\alpha_j(\w)\right\}\notag\\
&\le \binom{s}{2}p^{m-2}.
\end{align}

Finally, we prove the inequality (\ref{Eq. inequality in Lemma estimates for nontrival character sums}). As $\chi_1,\cdots,\chi_s$ are not all trivial, we have 
$$l=\lcm\{\ord(\chi_i): 1\le i\le s\}>1.$$
Fix a character $\psi\in\widehat{\mathbb{F}_p^*}$ with $\ord(\psi)=l$. Then there are integers $1\le e_1,\cdots, e_s\le l$ such that $\chi_i=\psi^{e_i}$ for any $1\le i\le s$. Since $\chi_1,\cdots,\chi_s$ are not all trivial, at least one of $e_1,\cdots,e_s$ is less than $l$. By this and (\ref{Eq. expression of Li}), one can verify that 
\begin{align}\label{Eq. sum is equal to good plus bad}
	\sum_{\x\in\mathbb{F}_p^m}\prod_{i=1}^{s}\chi_i(L_i(\x))
&=\sum_{\w\in W}\sum_{t\in\mathbb{F}_p}\prod_{i=1}^{s}\chi_i(L_i(\w+t\v))\notag\\
&=\sum_{\w\in W}\sum_{t\in\mathbb{F}_p}\prod_{i=1}^{s}\chi_i(c_i(t-\alpha_i(\w)))\notag\\
&=\sum_{\w\in W}\sum_{t\in\mathbb{F}_p}\prod_{i=1}^{s}\psi^{e_i}(c_i(t-\alpha_i(\w)))\notag\\
&=\sum_{\w\in W_{\good}}\sum_{t\in\mathbb{F}_p}\prod_{i=1}^{s}\psi^{e_i}(c_i(t-\alpha_i(\w)))+\sum_{\w\in W_{\bad}}\sum_{t\in\mathbb{F}_p}\prod_{i=1}^{s}\psi^{e_i}(c_i(t-\alpha_i(\w))).
\end{align}
	
	For any $\w\in W$, let 
	$$f_{\w}(x)=\prod_{1\le i\le s}\left(x-\alpha_i(\w)\right)^{e_i}\in\mathbb{F}_p[x].$$
	If $\w\in W_{\good}$, then $\alpha_1(\w),\alpha_2(\w),\cdots, \alpha_s(\w)$ are pairwise distinct. Noting that $1\le e_1,\cdots,e_s\le l$ and $\min\{e_i: 1\le i\le s\}<l$, we have 
		$$f_{\w}(x)\neq g(x)^l$$
		for any $g(x)\in\mathbb{F}_p[x]$.  Applying Lemma \ref{Lem. the Weil Bound}, for each $\w\in W_{\good}$ we obtain 
$$\left|\sum_{t\in\mathbb{F}_p}\prod_{i=1}^{s}\psi^{e_i}(c_i(t-\alpha_i(\w)))\right|=\left|\sum_{t\in\mathbb{F}_p}\psi\left(cf_{\w}(t)\right)\right|\le (s-1)\sqrt{p},$$
where 
$$c=\prod_{1\le i\le s}c_i^{e_i}\in\mathbb{F}_p^*.$$
Hence, 
\begin{align}\label{Eq. estimate for good w}
	     \left|\sum_{\w\in W_{\good}}\sum_{t\in\mathbb{F}_p}\prod_{i=1}^{s}\psi^{e_i}(c_i(t-\alpha_i(\w)))\right|
&\le  \sum_{\w\in W_{\good}}\left|\sum_{t\in\mathbb{F}_p}\prod_{i=1}^{s}\psi^{e_i}(c_i(t-\alpha_i(\w)))\right|\notag\\
&\le \#W_{\good}\cdot (s-1)\cdot \sqrt{p}\notag\\
&\le \#W\cdot (s-1)\cdot \sqrt{p}\notag\\
&=(s-1)\cdot p^{m-\frac{1}{2}}.
\end{align}		
On the other hand, by (\ref{Eq. upper bound for W bad}) one can verify that 
\begin{align}\label{Eq. estimate for bad w}
	\left|\sum_{\w\in W_{\bad}}\sum_{t\in\mathbb{F}_p}\prod_{i=1}^{s}\psi^{e_i}(c_i(t-\alpha_i(\w)))\right|
&\le  \sum_{\w\in W_{\bad}}\left|\sum_{t\in\mathbb{F}_p}\prod_{i=1}^{s}\psi^{e_i}(c_i(t-\alpha_i(\w)))\right|\notag\\
&\le \sum_{\w\in W_{\bad}}p\notag\\
&=\#W_{\bad}\cdot p \notag\\
&\le \binom{s}{2}\cdot p^{m-1}.
\end{align}
		
Combining (\ref{Eq. estimate for good w}) and (\ref{Eq. estimate for bad w}) with (\ref{Eq. sum is equal to good plus bad}), we obtain 
\begin{align*}
	 &\left|\sum_{\x\in\mathbb{F}_p^m}\prod_{i=1}^{s}\chi_i(L_i(\x))\right|\\
\le &\left|\sum_{\w\in W_{\good}}\sum_{t\in\mathbb{F}_p}\prod_{i=1}^{s}\psi^{e_i}(c_i(t-\alpha_i(\w)))\right|+\left|\sum_{\w\in W_{\bad}}\sum_{t\in\mathbb{F}_p}\prod_{i=1}^{s}\psi^{e_i}(c_i(t-\alpha_i(\w)))\right|\\
\le & (s-1)\cdot p^{m-\frac{1}{2}}+\frac{s(s-1)}{2}\cdot p^{m-1}.
\end{align*}
		
In view of the above, we have completed the proof. 
	\end{proof}
	
	Let 
	$$W_{p-1}=\#\left\{d\ge 1: d\ \text{is square-free and}\ d\mid p-1\right\}$$
	be the number of positive square-free divisors of $p-1$. We also need the following known result. 
	\begin{lemma}\label{Lem. W p-1 is very small}
		For any real number $\varepsilon>0$, if $p$ is sufficiently large, then $W_{p-1}\ll_{\varepsilon}p^{\varepsilon}$. 
	\end{lemma}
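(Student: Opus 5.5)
We argue directly from the prime factorization of $p-1$. Write $\omega(n)$ for the number of distinct prime factors of $n$. Square-free divisors of $p-1$ correspond bijectively to subsets of the set of prime divisors of $p-1$, so
$$W_{p-1}=2^{\omega(p-1)}.$$
It therefore suffices to show that $2^{\omega(n)}\ll_\varepsilon n^{\varepsilon}$ for all positive integers $n$. Applying this with $n=p-1<p$ then gives the lemma.

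To bound $2^{\omega(n)}/n^{\varepsilon}$, we use the multiplicative trick familiar from the divisor bound. Let $q_1,\dots,q_r$ be the distinct primes dividing $n$, so $r=\omega(n)$. Since $q_1\cdots q_r\le n$, we get
$$\frac{2^{\omega(n)}}{n^{\varepsilon}}\le\prod_{i=1}^{r}\frac{2}{q_i^{\varepsilon}}.$$
Each factor with $q_i\ge 2^{1/\varepsilon}$ is at most $1$. Each factor with $q_i<2^{1/\varepsilon}$ is at most $2$, because $q_i^{\varepsilon}\ge1$. There are fewer than $2^{1/\varepsilon}$ primes below $2^{1/\varepsilon}$, so
$$\frac{2^{\omega(n)}}{n^{\varepsilon}}\le 2^{2^{1/\varepsilon}}=:C_\varepsilon.$$
This constant depends only on $\varepsilon$. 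Hence $W_{p-1}\le C_\varepsilon (p-1)^{\varepsilon}\le C_\varepsilon p^{\varepsilon}$, which is the statement for all $p$, and in particular for all sufficiently large $p$.

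An alternative route is the classical estimate $\omega(n)\le(1+o(1))\log n/\log\log n$. It gives $2^{\omega(p-1)}=p^{O(1/\log\log p)}$, which is at most $p^{\varepsilon}$ once $p$ is large in terms of $\varepsilon$. The product argument above is self-contained, so I would present that one.

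The only point needing care is bookkeeping the small primes. Each small prime factor contributes at most a factor $2$, and there are boundedly many such primes in terms of $\varepsilon$. Beyond that, the proof is routine.
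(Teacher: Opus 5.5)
Your proof is correct, but it takes a different route from the paper. The paper does not use the multiplicative splitting trick. Instead it quotes Robin's explicit inequality $\omega(n)<1.3841\log n/\log\log n$ for $n\ge 3$ and concludes that $W_{p-1}=2^{\omega(p-1)}\le (p-1)^{1.3841\log 2/\log\log(p-1)}$. This is essentially the ``alternative route'' you sketch in your penultimate paragraph.

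Your argument is the standard divisor-bound device of splitting the prime factors at $2^{1/\varepsilon}$. It is fully elementary and self-contained, and it holds for every $n$ rather than only for large $p$. The cost is an enormous constant $C_\varepsilon=2^{2^{1/\varepsilon}}$.

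The paper's route relies on an external explicit estimate, but it gives the stronger uniform statement $W_{p-1}\le p^{O(1/\log\log p)}=p^{o(1)}$. In particular, $W_{p-1}\le p^{\varepsilon}$ with implied constant $1$ once $p$ is large in terms of $\varepsilon$.

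For the only use of this lemma in the paper, either version suffices. That use is to ensure $W_{p-1}^{s}/\sqrt{p}\to 0$ for fixed $s$ when verifying the linear forms condition for $v_p$, and taking $\varepsilon<1/(2s)$ in your bound works just as well.
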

	
	\begin{proof}
     Let 
$$\omega_{n}=\#\left\{q: q\mid n\ \text{and}\ q\ \text{is a prime}\right\}$$
     be the number of all distinct prime divisors of $n$. Robin \cite[Theorem 11]{Robin} proved that for any integer $n\ge 3$, we have 
     	$$\omega_{n}<1.3841\frac{\log (n)}{\log\log (n)}.$$
     	Thus, applying this with $n=p-1$, we obtain
     	$$W_{p-1}=2^{\omega_{p-1}}=e^{\omega_{p-1}\cdot \log 2}\le (p-1)^{\frac{1.3841\cdot\log 2}{\log\log(p-1)}}\ll_{\varepsilon}p^{\varepsilon}$$
     	for any sufficiently large prime $p$. This completes the proof. 
	\end{proof}

	Now we are in a position to verify that the function $v_p$, defined by (\ref{Eq. definition of vp}), satisfies the $k$-linear forms condition. 
	
	\begin{lemma}\label{Lem. vp satisfies the k linear forms condition}
			Let $m,s$ be nonnegative integers with $m\ge2$, and let $L_i(x_1,x_2,\cdots,x_m)\in\mathbb{Z}[x_1,x_2,\cdots,x_m]$ with $\deg(L_i)=1$ for any $1\le i\le s$. Suppose  that $L_i$ and $L_j$ are not proportional over $\mathbb{Q}$ for any $1\le i\neq j\le s$. Then, for every sufficiently large prime $p$ we have 
			$$\mathbb{E}\left[\prod_{i=1}^{s}v_p(L_i(\x)): \x\in\mathbb{F}_p^m\right]=\frac{1}{p^m}\sum_{\x\in\mathbb{F}_p^m}\prod_{i=1}^{s}v_p(L_i(\x))=1+o(1).$$
		   In particular, for any integer $k\ge3$, the function $v_p$ satisfies the $k$-linear forms condition. 
	\end{lemma}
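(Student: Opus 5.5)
Since $v_p=\theta_{p-1}^{-1}1_{\mathcal{P}_p}$, Lemma \ref{Lem. characteristic function of P} gives
$$v_p(x)=\sum_{\chi\in\widehat{\mathbb{F}_p^*}}c_\chi\,\chi(x),\qquad c_{\chi_0}=1,$$
valid for every $x\in\mathbb{F}_p$, including $x=0$, since $\chi(0)=0$ for every $\chi$, the trivial one included. We may assume $s\ge2$, or handle $s\le1$ directly: for $s=1$ we have $\mathbb{E}[v_p(L_1(\x))]=\#\mathcal{P}_p/(\theta_{p-1}p)=(p-1)/p$. Expanding the product gives
$$\frac{1}{p^m}\sum_{\x}\prod_{i=1}^s v_p(L_i(\x))=\sum_{\chi_1,\dots,\chi_s}\Bigl(\prod_{i=1}^s c_{\chi_i}\Bigr)\frac{1}{p^m}\sum_{\x\in\mathbb{F}_p^m}\prod_{i=1}^s\chi_i(L_i(\x)).$$
We split this into the term with all $\chi_i=\chi_0$ and the remaining terms.

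\textbf{Main term.} It equals $p^{-m}\#\{\x:L_i(\x)\neq0\ \forall i\}$. For $p$ large each $L_i$ has nonzero linear part mod $p$, as shown in the proof of Lemma \ref{Lem. estimates for nontrival character sums}, so each zero set has size $p^{m-1}$. Hence the main term is $1+O(s/p)$.

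\textbf{Error terms.} For each tuple not all trivial, Lemma \ref{Lem. estimates for nontrival character sums} bounds the inner normalized sum by $(s-1)p^{-1/2}+\binom{s}{2}p^{-1}\ll_s p^{-1/2}$. Only characters with $\mu(\ord\chi)\ne0$ contribute, and $|c_\chi|=1/\varphi(d)$ where $d=\ord(\chi)$ is squarefree. For each squarefree $d\mid p-1$ there are exactly $\varphi(d)$ characters of order $d$, so
$$\sum_\chi|c_\chi|=\sum_{\substack{d\mid p-1\\ d\ \text{squarefree}}}1=W_{p-1}.$$
The total error is therefore at most $W_{p-1}^s\cdot O_s(p^{-1/2})$. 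By Lemma \ref{Lem. W p-1 is very small} with $\varepsilon=1/(4s)$, this is $\ll_s p^{-1/4}=o(1)$. This counting step is the key point: the number of character tuples must be controlled by a subpolynomial factor, and the divisor bound does this.

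\textbf{The $k$-linear forms condition.} It remains to check that the condition in Definition \ref{Def. linear forms condition} is an instance of the above. Here $N=p$, $m=2k\ge 2$, and the variables are $x_i^{(0)},x_i^{(1)}$. The forms are
$$L_{j,\bm\omega}=\sum_{i\ne j}(i-j)x_i^{(\omega_i)},$$
one for each pair $(j,\bm\omega)$ with $n_{j,\bm\omega}=1$. Each form is linear with integer coefficients, and the coefficients $i-j$ are nonzero. The main thing to verify is that distinct pairs $(j,\bm\omega)$ give non-proportional forms over $\mathbb{Q}$.
\begin{itemize}
\item If $j\neq j'$, the form for $j$ involves no variable $x_j^{(\cdot)}$, while the form for $j'$ does, since $j\ne j'$. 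So the two forms are not proportional.
\item If $j=j'$ but $\bm\omega\ne\bm\omega'$, there is an index $i$ with $\omega_i\ne\omega'_i$. Then one form uses $x_i^{(0)}$ and the other uses $x_i^{(1)}$, again with different supports, so they are not proportional.
\end{itemize}
Applying the main estimate with this family of forms gives the expectation $1+o(1)$ for every choice of exponents, which is the $k$-linear forms condition.
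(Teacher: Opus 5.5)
Your proposal is correct and follows the paper's proof essentially step for step. Both arguments use the character expansion of $v_p$ with $c_{\chi_0}=1$, get the main term $1+O(s/p)$ from counting zeros of the $L_i$, bound the non-trivial tuples by $W_{p-1}^s\cdot O_s(p^{-1/2})$ via Lemma \ref{Lem. estimates for nontrival character sums} and $\sum_\chi|c_\chi|=W_{p-1}$, and verify non-proportionality of the forms $L_{j,\bm\omega}$ by the same support argument. The only differences are cosmetic: you choose $\varepsilon=1/(4s)$ explicitly to obtain an $O_s(p^{-1/4})$ error, and you compute the $s=1$ case directly as $(p-1)/p$ rather than through characters.
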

	
	\begin{proof}
		When $s=0$, i.e., no factor is selected, the expectation is exactly $1$. For $s=1$, as $p$ is sufficiently large, the reduction of $L_1$ modulo $p$ has degree $1$. Hence, for any $y\in\mathbb{F}_p$ we have 
		$$\#\left\{\x\in\mathbb{F}_p^m: L_1(\x)=y\right\}=p^{m-1}.$$
		Applying this and noting that
		$$\sum_{y\in\mathbb{F}_p}\chi(y)=
		\begin{cases}
			p-1 & \mbox{if}\ \chi=\chi_0,\\
			0    & \mbox{otherwise},
		\end{cases}$$
		we obtain 
		\begin{align*}
			\mathbb{E}\left[v_p(L_1(\x)): \x\in\mathbb{F}_p^m\right]
&=\frac{1}{p^m}\sum_{\x\in\mathbb{F}_p^m}\sum_{\chi\in\widehat{\mathbb{F}_p^*}}c_{\chi}\cdot \chi(L_1(\x))\\
&=\frac{1}{p^m}\sum_{\chi\in\widehat{\mathbb{F}_p^*}}\sum_{\x\in\mathbb{F}_p^m}c_{\chi}\cdot \chi(L_1(\x))\\
&=\frac{1}{p^m}\sum_{\chi\in\widehat{\mathbb{F}_p^*}}\sum_{y\in\mathbb{F}_p}c_{\chi}\cdot \chi(y)\#\left\{\x\in\mathbb{F}_p^m: L_1(\x)=y\right\}\\
&=\frac{1}{p}\sum_{\chi\in\widehat{\mathbb{F}_p^*}}c_{\chi}\sum_{y\in\mathbb{F}_p}\chi(y)\\
&=1-\frac{1}{p}\\
&=1+o(1).
		\end{align*}
		
		Suppose now $s\ge2$. By Lemma \ref{Lem. characteristic function of P} and (\ref{Eq. definition of vp}), one can verify that 
		\begin{align}\label{Eq. sum is trivial plus nontrivial}
			\sum_{\x\in\mathbb{F}_p^m}\prod_{i=1}^{s}v_p(L_i(\x))
&=\sum_{\x\in\mathbb{F}_p^m}\prod_{i=1}^{s}\sum_{\chi\in\widehat{\mathbb{F}_p^*}}c_{\chi}\cdot \chi(L_i(\x))\notag\\
&=\sum_{\x\in\mathbb{F}_p^m}\sum_{\chi_1,\cdots,\chi_s\in\widehat{\mathbb{F}_p^*}}\prod_{i=1}^{s}c_{\chi_i}\cdot \chi_i(L_i(\x))\notag\\
&=\sum_{\chi_1,\cdots,\chi_s\in\widehat{\mathbb{F}_p^*}}\prod_{i=1}^{s}c_{\chi_i} \sum_{\x\in\mathbb{F}_p^m}\prod_{i=1}^{s}\chi_i(L_i(\x))\notag\\
&=S_1+S_2,
		\end{align}
	where 
	$$S_1=\sum_{\x\in\mathbb{F}_p^m}\prod_{i=1}^{s}\chi_0(L_i(\x)),$$
	and
	$$S_2=\sum_{\substack{\chi_1,\cdots,\chi_s\in\widehat{\mathbb{F}_p^*}\\ \chi_1,\cdots,\chi_s\ \text{are not all trivial}}}\prod_{i=1}^{s}c_{\chi_i} \sum_{\x\in\mathbb{F}_p^m}\prod_{i=1}^{s}\chi_i(L_i(\x)).$$

		For $S_1$, we clearly have 
		\begin{equation}\label{Eq. upper bound for trivial}
			\sum_{\x\in\mathbb{F}_p^m}\prod_{i=1}^{s}\chi_0(L_i(\x))\le p^m.
		\end{equation}
		On the other hand, since $p$ is sufficiently large, the reduction of $L_i$ modulo $p$ has degree $1$. This implies 
		$$\#\left\{\x\in\mathbb{F}_p^m: L_i(\x)=0\right\}=p^{m-1}.$$
		From this, one can verify that 
		\begin{align}\label{Eq. lower bound for trivial}
			\sum_{\x\in\mathbb{F}_p^m}\prod_{i=1}^{s}\chi_0(L_i(\x))
&=\#\left\{\x\in\mathbb{F}_p^m: L_i(\x)\neq 0\ \text{for any $1\le i\le s$}\right\}\notag\\
&=p^m-\#\left\{\x\in\mathbb{F}_p^m: L_i(\x)=0\ \text{for some $1\le i\le s$}\right\}\notag\\
&=p^m-\#\bigcup_{1\le i\le s}\left\{\x\in\mathbb{F}_p^m: L_i(\x)=0\right\}\notag\\
&\ge p^m-\sum_{1\le i\le s}\#\left\{\x\in\mathbb{F}_p^m: L_i(\x)=0\right\}\notag\\
&\ge p^m-s\cdot p^{m-1}.
		\end{align}
		
		We next consider $S_2$. For any positive divisor $d$ of $p-1$, it is known that
		$$\#\left\{\chi\in\widehat{\mathbb{F}_p^*}: \ord(\chi)=d\right\}=\varphi(d).$$
		Hence, by Lemma \ref{Lem. characteristic function of P} we obtain 
		\begin{align}\label{Eq. sum of c chi}
			\sum_{\chi\in\widehat{\mathbb{F}_p^*}}|c_{\chi}|
&=\sum_{\chi\in\widehat{\mathbb{F}_p^*}}\frac{|\mu(\ord(\chi))|}{\varphi(\ord(\chi))}\notag\\
&=\sum_{d\mid p-1}\frac{|\mu(d)|}{\varphi(d)}\cdot \#\left\{\chi\in\widehat{\mathbb{F}_p^*}: \ord(\chi)=d\right\}\notag\\
&=\sum_{d\mid p-1}|\mu(d)|\notag\\
&=W_{p-1}.
		\end{align}
		Assembling (\ref{Eq. sum of c chi}) and Lemma \ref{Lem. estimates for nontrival character sums} gives 
		\begin{align}\label{Eq. upper bound for the nontrivial}
	 |S_2|
&\le \sum_{\substack{\chi_1,\cdots,\chi_s\in\widehat{\mathbb{F}_p^*}\\ \chi_1,\cdots,\chi_s\ \text{are not all trivial}}}\prod_{i=1}^{s}|c_{\chi_i}| \left|\sum_{\x\in\mathbb{F}_p^m}\prod_{i=1}^{s}\chi_i(L_i(\x))\right|\notag\\
&\le \left((s-1)\cdot p^{m-\frac{1}{2}}+\frac{s(s-1)}{2}\cdot p^{m-1}\right)\cdot \sum_{\chi_1,\cdots,\chi_s\in\widehat{\mathbb{F}_p^*}}\prod_{i=1}^{s}|c_{\chi_i}|\notag\\
&=\left((s-1)\cdot p^{m-\frac{1}{2}}+\frac{s(s-1)}{2}\cdot p^{m-1}\right)\cdot \prod_{i=1}^{s}\sum_{\chi_i\in\widehat{\mathbb{F}_p^*}}|c_{\chi_i}|\notag\\
&=\left((s-1)\cdot p^{m-\frac{1}{2}}+\frac{s(s-1)}{2}\cdot p^{m-1}\right)\cdot W_{p-1}^s.
		\end{align}
	Combining (\ref{Eq. upper bound for trivial}), (\ref{Eq. lower bound for trivial}) and (\ref{Eq. upper bound for the nontrivial}) with (\ref{Eq. sum is trivial plus nontrivial}), we obtain 
	$$ 1-\left(\frac{s-1}{\sqrt{p}}+\frac{s(s+1)}{2p}\right)\cdot W_{p-1}^s
	\le \frac{1}{p^m}\sum_{\x\in\mathbb{F}_p^m}\prod_{i=1}^{s}v_p(L_i(\x))\le 1+\left(\frac{s-1}{\sqrt{p}}+\frac{s(s-1)}{2p}\right)\cdot W_{p-1}^s.$$
	Applying Lemma \ref{Lem. W p-1 is very small} to the above inequality, we obtain 
	$$\mathbb{E}\left[\prod_{i=1}^{s}v_p(L_i(\x)): \x\in\mathbb{F}_p^m\right]=\frac{1}{p^m}\sum_{\x\in\mathbb{F}_p^m}\prod_{i=1}^{s}v_p(L_i(\x))=1+o(1).$$
		
	Finally, observe that each polynomial appearing in Definition \ref{Def. linear forms condition} has the form 
	$$L_{j,{\bm{\omega}}}=\sum_{i\in[1,k]\setminus\{j\}}(i-j)x_i^{(\omega_i)}.$$
	
	When $j\neq j'$, the polynomial $L_{j,{\bm{\omega}}}$ omits the variables $x_j^{(0)}$ and $x_j^{(1)}$, while $L_{j',{\bm{\omega}}'}$ contains exactly one of them. This implies that $L_{j,{\bm{\omega}}}$ and $L_{j',{\bm{\omega}'}}$ are not proportional. If $j=j'$ but $\bm{\omega}\neq \bm{\omega}'$, then for some $i\neq j$, one polynomial contains $x_i^{(0)}$ but not $x_i^{(1)}$, and the other precisely contains $x_i^{(1)}$ but not $x_i^{(0)}$. Hence, these two polynomials cannot be proportional. Applying the above discussions, we immediately see that $v_p$ satisfies the $k$-linear forms condition for any integer $k\ge3$.
		
	In view of the above, we have completed the proof. 
	\end{proof}
	
	To complete the proof of our first theorem, we need the following result due to Rosser and Schoenfeld \cite[Theorem 15]{RS}.
	
	\begin{lemma}\label{Lem. on low bound for theta}
		For any positive integer $n\ge 3$, we have 
		$$\frac{n}{\varphi(n)}<e^{\gamma}\cdot \log\log n+\frac{2.50637}{\log\log n}\ll \log\log n,$$
		where $\gamma$ is the Euler constant. 
	\end{lemma}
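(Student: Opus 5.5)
The bound is purely multiplicative and does not depend on anything earlier in the paper. The plan is to reduce it to explicit forms of Mertens' product theorem and Chebyshev's bound for $\theta(x)=\sum_{q\le x}\log q$. For the qualitative estimate $n/\varphi(n)\ll\log\log n$, which is all that is used later, classical (non-explicit) versions of these two theorems are enough. The precise constants $e^{\gamma}$ and $2.50637$ need the explicit estimates of Rosser and Schoenfeld.

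\emph{Step 1 (reduction to primorials).} Write $n/\varphi(n)=\prod_{q\mid n}(1-1/q)^{-1}$, the product running over the $r=\omega_n$ distinct prime divisors of $n$. Each factor $(1-1/q)^{-1}$ decreases as $q$ grows. So if $p_1<p_2<\cdots$ denotes the sequence of all primes, replacing the $i$-th smallest prime divisor of $n$ by $p_i$ can only increase the product. Putting $x=p_r$, this gives
$$\frac{n}{\varphi(n)}\le\prod_{q\le x}\Bigl(1-\frac1q\Bigr)^{-1}.$$
Moreover $n\ge\prod_{q\mid n}q\ge\prod_{i\le r}p_i$, so $\theta(x)\le\log n$.

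\emph{Step 2 (Mertens plus Chebyshev).} Mertens' theorem gives
$$\prod_{q\le x}\Bigl(1-\frac1q\Bigr)^{-1}=e^{\gamma}\log x\,\Bigl(1+O\bigl((\log x)^{-2}\bigr)\Bigr).$$
A Chebyshev-type lower bound $\theta(x)\ge x\bigl(1-O(1/\log x)\bigr)$ turns $\theta(x)\le\log n$ into
$$\log x\le\log\log n+O\Bigl(\frac{1}{\log\log n}\Bigr).$$
Substituting this into the Mertens asymptotic gives
$$\frac{n}{\varphi(n)}\le e^{\gamma}\log\log n+O\Bigl(\frac{1}{\log\log n}\Bigr),$$
and in particular $n/\varphi(n)\ll\log\log n$. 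This is the only consequence used in the proof of Theorem \ref{Thm. A}.

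\emph{Step 3 (explicit constants, main obstacle).} To obtain the stated constant $2.50637$, I would insert the explicit inequalities of Rosser--Schoenfeld, for example
$$\prod_{q\le x}\Bigl(1-\frac1q\Bigr)^{-1}<e^{\gamma}\log x\,\Bigl(1+\frac{1}{2\log^2x}\Bigr)\quad\text{and}\quad\theta(x)>x\Bigl(1-\frac{1}{\log x}\Bigr),$$
each valid beyond an explicit threshold in $x$. I would then track the resulting error terms carefully, and handle the finitely many remaining small $n$ by direct computation. The extremal cases are primorials, and the constant $2.50637$ arises from the worst such case, $n=2\cdot3\cdot5\cdots23$. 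This bookkeeping, together with the finite verification, is the delicate part of the argument. For the purposes of this paper, however, only the $\ll\log\log n$ consequence from Step 2 is needed.
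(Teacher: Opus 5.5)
The paper gives no proof of this lemma. It quotes Theorem 15 of Rosser and Schoenfeld, and later uses only $n/\varphi(n)\ll\log\log n$. Your Steps 1--2 do prove that consequence. The elementary forms of Mertens and Chebyshev already suffice for it; the $O((\log x)^{-2})$ error and the bound $\theta(x)\ge x(1-O(1/\log x))$ you quote are prime-number-theorem strength, not Chebyshev. Step 2 needs one small repair. When $x=p_{\omega(n)}$ is small compared with $\log n$ (e.g.\ $n$ a prime power), the Mertens error $O(1/\log x)$ is not $O(1/\log\log n)$. So first replace $x$ by the largest prime $X$ with $\theta(X)\le\log n$; this is allowed because the product increases with $x$.

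The genuine gap is Step 3, which is where the stated inequality actually lives. With the inputs you name, it cannot succeed. Put $u=\log x$ and $\ell=\log\log n$. The bound $x(1-1/u)<\theta(x)\le\log n$ only gives $u<\ell-\log(1-1/u)=\ell+1/u+O(u^{-2})$, so the most you can conclude is
$$\frac{n}{\varphi(n)}<e^{\gamma}\Bigl(u+\frac{1}{2u}\Bigr)\le e^{\gamma}\ell+\frac{\tfrac32e^{\gamma}+o(1)}{\ell}.$$
Since $\tfrac32e^{\gamma}\approx2.67>2.50637$, this is weaker than the claim for every large $n$, so no finite verification can close the argument.

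You need the sharper bound $\theta(x)>x\bigl(1-\frac{1}{2\log x}\bigr)$, which Rosser and Schoenfeld prove for $x\ge563$. Then $u<\ell+\frac{1}{2u}+O(u^{-2})$, and the limiting constant drops to $e^{\gamma}\approx1.78$. So the bound holds for all primorials $N_r=p_1\cdots p_r$ with $p_r$ beyond an explicit threshold.

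Reduce general $n$ to primorials by monotonicity, not by checking ``small $n$'' (non-primorial $n$ with small $p_{\omega(n)}$ can be arbitrarily large). With $r=\omega(n)$ one has $n/\varphi(n)\le N_r/\varphi(N_r)$ and $N_r\le n$. Since $t\mapsto e^{\gamma}t+2.50637/t$ is increasing for $t\ge1.19$, the primorial case implies the general case whenever $\log\log N_r\ge1.19$. Otherwise $r\le2$, so $n/\varphi(n)\le3<2\sqrt{2.50637\,e^{\gamma}}$, and the latter is a lower bound for the right-hand side.

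The finitely many remaining primorials are then checked directly. The quantity $(N_r/\varphi(N_r)-e^{\gamma}\log\log N_r)\log\log N_r$ peaks at about $2.506369$ for $N_9=223092870$, as you said. This completed argument is essentially Rosser and Schoenfeld's own proof, which the paper simply cites.
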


	With the above preparations, we now give the proof of our theorem.
	
	{\noindent\bfseries Proof of Theorem \ref{Thm. A}}. Recall that $v_p(x)=\frac{1}{\theta_{p-1}}\cdot 1_{\mathcal{P}_p}(x)$. For any subset $A\subseteq \mathcal{P}_p$ with $\#A/\#\mathcal{P}_p\ge \delta$, define the function 
	$$f_A(x)=\frac{1}{\theta_{p-1}}\cdot 1_A(x).$$
	Clearly $0\le f_A(x)\le v_p(x)$ for all $x\in\mathbb{F}_p$. Since $\#A/\#\mathcal{P}_p\ge \delta$ and $\theta_{p-1}=\varphi(p-1)/(p-1)$, when $p$ is sufficiently large, we have 
	$$\mathbb{E}\left[f_A(x): x\in\mathbb{F}_p\right]=\frac{1}{p}\sum_{x\in\mathbb{F}_p}f_A(x)=\frac{\#A}{\varphi(p-1)}\cdot\frac{p-1}{p}=\frac{\#A}{\#\mathcal{P}_p}\cdot\frac{p-1}{p}\ge \delta/2.$$
	Since $v_p$ satisfies the $k$-linear forms condition, applying Theorem \ref{Thm. CFZ}, there is a real number $\lambda=\lambda(k,\delta/2)>0$ such that 
	$$\mathbb{E}\left[f_A(x)f_A(x+d)\cdots f_A(x+(k-1)d): x, d\in\mathbb{F}_p\right]\ge \lambda.$$
	Combining this with Lemma \ref{Lem. on low bound for theta}, when $p$ is sufficiently large, one can verify that 
	\begin{align*}
		\sum_{x,d\in\mathbb{F}_p}1_A(x)1_A(x+d)\cdots 1_A(x+(k-1)d)
&\ge \lambda\cdot p^2\cdot \theta_{p-1}^k\\
&=\lambda\cdot \frac{p^2\cdot \varphi(p-1)^k}{(p-1)^k}\\
&\gg \frac{p^2}{(\log\log (p-1))^k}\\
&>p.
	\end{align*}
	From this and noting that 
	$$\sum_{x\in\mathbb{F}_p}1_A(x)^k=\sum_{x\in\mathbb{F}_p}1_A(x)=\#A\le p,$$
	there exist $x,d\in\mathbb{F}_p$ with $d\in\mathbb{F}_p^*$ such that 
	$$x, x+d, \cdots, x+(k-1)d\in A,$$
	that is, $A$ contains a nontrivial arithmetic progression of length $k$. 
	
	In view of the above, we have completed the proof. \qed

	\section{Proofs of Corollary \ref{Corollary of Thm. A}}
	\setcounter{lemma}{0}
	\setcounter{theorem}{0}
	\setcounter{equation}{0}
	\setcounter{conjecture}{0}
	\setcounter{remark}{0}
	\setcounter{corollary}{0}
	
	Let $p$ be a prime. For any integer $x$, we use the symbol $\bar{x}$ to denote its reduction modulo $p$, i.e., $\bar{x}=x\mod{p}\in\mathbb{F}_p$. We begin with the following result.
	
	\begin{lemma}\label{Lem. lift mod p to Z}
		Let $k\ge3$ be an integer and let $p$ be an odd prime. Let $S\subseteq\mathbb{Z}$ with $\#S\ge2$ and 
		$$\diam(S)=\sup\left\{|x-y|: x,y\in S\ \text{and}\ x\neq y\right\}<\frac{p}{2}.$$
		Suppose that there are $b_0,b_1,\cdots,b_{k-1}\in S$ such that 
		$$\overline{b_0},\overline{b_1},\cdots,\overline{b_{k-1}}$$
		forms a nontrivial $k$-term arithmetic progression over $\mathbb{F}_p$. Then $b_0,b_1,\cdots,b_{k-1}$ is a nontrivial $k$-term arithmetic progression over $\mathbb{Z}$. 
	\end{lemma}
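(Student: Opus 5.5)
We use the smallness of the diameter to turn congruences modulo $p$ into equalities of integers. Since $\overline{b_0},\dots,\overline{b_{k-1}}$ is a nontrivial arithmetic progression over $\mathbb{F}_p$, there is $\bar d\in\mathbb{F}_p^*$ with $\overline{b_{j}}=\overline{b_0}+j\bar d$ for $0\le j\le k-1$. In particular
$$b_{j+1}-b_j\equiv b_{j}-b_{j-1}\pmod p$$
for every $1\le j\le k-2$. Also $b_{j+1}\not\equiv b_j\pmod p$, because $\bar d\neq0$. Hence the integers $b_0,\dots,b_{k-1}$ are pairwise distinct. Indeed, the terms of the progression are pairwise distinct in $\mathbb{F}_p$, since $j\bar d=j'\bar d$ with $0\le j,j'\le k-1<p$ forces $j=j'$. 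Here we use $k\le p$, which holds since the progression has $k$ distinct terms in $\mathbb{F}_p$.

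First I show that all the consecutive integer differences coincide. Set $d_j=b_{j}-b_{j-1}\in\mathbb{Z}$ for $1\le j\le k-1$. Each $d_j$ is a difference of two distinct elements of $S$, so $0<|d_j|\le\diam(S)<p/2$. Consecutive differences satisfy $d_{j+1}\equiv d_j\pmod p$, so $d_{j+1}-d_j$ is a multiple of $p$. On the other hand $|d_{j+1}-d_j|\le|d_{j+1}|+|d_j|<p$. Therefore $d_{j+1}=d_j$ for all $j$.

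Call the common value $d$. Then $b_j=b_0+jd$ in $\mathbb{Z}$ for all $j$, and $d\neq0$ since $d\equiv b_1-b_0\not\equiv0\pmod p$. So $b_0,\dots,b_{k-1}$ is a nontrivial $k$-term arithmetic progression over $\mathbb{Z}$.

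The only point needing care is the bound on $|d_{j+1}-d_j|$. It must be strictly less than $p$, and the triangle inequality with $\diam(S)<p/2$ gives exactly this. We could instead compare $b_{j+1}-b_{j-1}$ with $2(b_j-b_{j-1})$, but that needs a bound on $2|d_j|$, which is the same estimate. The oddness of $p$ is not essential beyond making the hypothesis meaningful. I expect no real obstacle; the whole argument is this elementary lifting step.
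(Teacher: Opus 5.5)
Your proof is correct and is essentially the paper's argument: your $d_{j+1}-d_j$ is exactly the paper's second difference $b_{j-1}-2b_j+b_{j+1}$. This quantity is divisible by $p$ and bounded in absolute value by $2\,\diam(S)<p$, hence zero, and $b_1\not\equiv b_0 \pmod p$ then gives a nonzero common difference. One small point: your remark on pairwise distinctness justifies $k\le p$ somewhat circularly, but the main argument never uses it (only $b_{j+1}\not\equiv b_j \pmod p$ is needed), and $k\le p$ is in any case part of the paper's definition of a nontrivial progression.
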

	
	\begin{proof}
		Since $\overline{b_0},\overline{b_1},\cdots,\overline{b_{k-1}}$ forms a nontrivial $k$-term arithmetic progression over $\mathbb{F}_p$, there is an integer $d$ with $d\not\equiv 0\pmod{p}$ such that 
		$$b_j\equiv b_0+jd\pmod{p}$$
		for any $0\le j\le k-1$. Thus, for any integer $j\in[0,k-3]$ we have 
		$$b_j-2b_{j+1}+b_{j+2}\equiv 0\pmod{p}.$$
		Noting that 
		$$\left|b_j-2b_{j+1}+b_{j+2}\right|\le \left|b_j-b_{j+1}\right|+\left|b_{j+2}-b_{j+1}\right|\le 2\cdot \diam(S)<p,$$
		the above congruence yields 
		$$b_j-2b_{j+1}+b_{j+2}=0$$
		for any $0\le j\le k-3$. This, together with $b_1-b_0\equiv d\not\equiv 0\pmod{p}$, implies that 
		$$b_0,b_1,\cdots,b_{k-1}$$ 
		forms a nontrivial $k$-term arithmetic progression over $\mathbb{Z}$. 
	\end{proof}
	
	Now we are in a position to prove our second result.
	
	{\noindent\bfseries Proof of Corollary \ref{Corollary of Thm. A}}. As $p$ is sufficiently large, we may assume $p>2$. Let 
	$$S_1=\left\{x\in\mathbb{Z}: 0<x<p/2\right\},\ \text{and}\ S_2=\left\{x\in\mathbb{Z}: p/2<x<p\right\}.$$
	Since $B=(B\cap S_1)\cup (B\cap S_2)$ and $(B\cap S_1)\cap (B\cap S_2)=\emptyset$, there is an $i\in\{1,2\}$ such that 
	$$\#(B\cap S_i)\ge \frac{\#B}{2}\ge\frac{\delta}{2}\cdot \#\Omega_p.$$
	Let $S=B\cap S_i$ and let $S_p$ be the reduction of $S$ modulo $p$, i.e., 
	$$S_p=\left\{\bar{x}: x\in S\right\}.$$
	Since $S\subseteq\{1,2,\cdots,p-1\}$, by the above discussion we have 
	$$\#S_p=\# S\ge\frac{\delta}{2}\cdot \#\Omega_p.$$
	Applying Theorem \ref{Thm. A}, there are $b_0,b_1,\cdots,b_{k-1}\in S$ such that 
	$$\overline{b_0},\overline{b_1},\cdots,\overline{b_{k-1}}$$
	forms a nontrivial $k$-term arithmetic progression over $\mathbb{F}_p$. From this and noting that $\diam(S)<p/2$, it follows from Lemma \ref{Lem. lift mod p to Z} that 
	$$b_0,b_1,\cdots,b_{k-1}\in S$$
	forms a nontrivial $k$-term arithmetic progression over $\mathbb{Z}$. 
	
	In view of the above, we have completed the proof. \qed

	\section{Proof of Theorem \ref{Thm. B}}
	\setcounter{lemma}{0}
	\setcounter{theorem}{0}
	\setcounter{equation}{0}
	\setcounter{conjecture}{0}
	\setcounter{remark}{0}
	\setcounter{corollary}{0}

	We begin with the well-known result due to Behrend \cite{Behrend}.
	
	\begin{lemma}[Behrend]\label{Lem. the Behrend theorem}
		There is an absolute constant $C>0$ such that for every sufficiently large integer $N$, there is a subset 
		$$B\subseteq\left\{1,2,\cdots,N\right\}\subseteq\mathbb{Z}^+$$
		with $\#B\ge N\cdot\exp\left(-C\sqrt{\log N}\right)$ such that $B$ contains no nontrivial $3$-term arithmetic progression over $\mathbb{Z}$. 
	\end{lemma}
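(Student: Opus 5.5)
The plan is the classical sphere construction of Behrend. Fix integers $n\ge 1$ and $d\ge 2$, to be chosen in terms of $N$. For each integer $r\ge 0$ let
$$X_r=\Bigl\{\x=(x_1,\cdots,x_n)\in\{0,1,\cdots,d-1\}^n:\ x_1^2+\cdots+x_n^2=r\Bigr\},$$
and map $\x\mapsto \phi(\x)=1+\sum_{i=1}^{n}x_i(2d)^{i-1}\in\mathbb{Z}^+$. Since $0\le x_i\le d-1$, we have $1\le\phi(\x)\le(2d)^n$, and $\phi$ is injective because base-$(2d)$ digit expansions are unique. We will take $B=\phi(X_r)$ for a suitably chosen $r$.

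The first step is to show that $\phi(X_r)$ contains no nontrivial $3$-term arithmetic progression. Suppose $\phi(\x)+\phi(\bm z)=2\phi(\bm y)$ with $\x,\bm y,\bm z\in X_r$. Every digit sum $x_i+z_i$ and every doubled digit $2y_i$ is at most $2d-2<2d$, so adding in base $2d$ produces no carries. Comparing digits therefore gives the vector identity $\x+\bm z=2\bm y$ in $\mathbb{Z}^n$.

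Now all three vectors lie on the Euclidean sphere of radius $\sqrt r$, and this sphere is strictly convex. The identity $\bm y=(\x+\bm z)/2$ gives $\|\bm y\|\le(\|\x\|+\|\bm z\|)/2$, with equality only if $\x=\bm z$. Concretely, $\|\x-\bm z\|^2=2\|\x\|^2+2\|\bm z\|^2-4\|\bm y\|^2=0$. Hence $\x=\bm y=\bm z$, and the progression is trivial.

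The second step is a pigeonhole count. The $r$ with $X_r\neq\emptyset$ satisfy $0\le r\le n(d-1)^2<nd^2$, and $\sum_r\#X_r=d^n$. So some $r$ has
$$\#X_r\ge \frac{d^{n}}{nd^2}.$$

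The last step is choosing the parameters, which I expect to be the only fiddly part, mainly the integer rounding. Take $n=\lfloor\sqrt{\log N}\rfloor$ and $d=\lfloor N^{1/n}/2\rfloor$, so that $(2d)^n\le N$ and $B\subseteq\{1,\cdots,N\}$. For large $N$ we have $N^{1/n}\to\infty$, so $d\ge N^{1/n}/4$. This gives
$$d^n\ge N\,4^{-n},\qquad d^2\le N^{2/n}.$$
Therefore
$$\#B\ge \frac{N}{4^n\, n\, N^{2/n}}=N\exp\Bigl(-n\log4-\log n-\frac{2\log N}{n}\Bigr).$$
With $n\asymp\sqrt{\log N}$, each term in the exponent is $O(\sqrt{\log N})$: the first is at most $\log 4\sqrt{\log N}$, the third is about $2\sqrt{\log N}$, and $\log n$ is negligible. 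This yields $\#B\ge N\exp(-C\sqrt{\log N})$ with an absolute constant $C$ (any $C>2+\log 4$ works for large $N$). The shift by $1$ in $\phi$ only ensures $B\subseteq\mathbb{Z}^+$ and does not affect whether a progression is present.
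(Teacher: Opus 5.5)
Your proof is correct and complete. The no-carry digit comparison, the parallelogram-law argument on the sphere, the pigeonhole over at most $nd^2$ radii, and the choice $n=\lfloor\sqrt{\log N}\rfloor$, $d=\lfloor N^{1/n}/2\rfloor$ all check out and yield any constant $C>2+\log 4$. The paper gives no proof of this lemma; it simply quotes Behrend's theorem, and your argument is exactly Behrend's classical sphere construction, so you have supplied the standard proof of the cited result.
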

	
	Now we prove our last theorem.
	
	{\noindent\bfseries Proof of Theorem \ref{Thm. B}}. Let $\varepsilon\in(0,1)$ be a fixed real number. Suppose that $p$ is sufficiently large. Applying Lemma \ref{Lem. the Behrend theorem}, there exists a subset 
	$$B\subseteq\left\{1,2,\cdots,\frac{p-1}{2}\right\}\subseteq\mathbb{Z}^+$$
	with 
	$$\#B\ge \frac{p-1}{2}\cdot\exp\left(-C\sqrt{\log ((p-1)/2)}\right)$$
	such that $B$ contains no nontrivial $3$-term arithmetic progression over $\mathbb{Z}$, where $C>0$ is an absolute constant appearing in Lemma \ref{Lem. the Behrend theorem}. By computations we clearly have 
	\begin{equation}\label{Eq. lower bound for B in the proof of Thm. B}
		\#B\gg p^{1-\frac{C_1}{\sqrt{\log p}}},
	\end{equation}
	where $C_1>0$ is independent of $p$. 
	
	Let $B_p=\{b\mod{p\mathbb{Z}}: b\in B\}$ be the reduction of $B$ modulo $p$. We claim that $B_p$ has no nontrivial $3$-term arithmetic progression over $\mathbb{F}_p$. In fact, suppose that there are $b_1,b_2,b_3\in B$ such that 
    $$b_1+b_3\equiv 2b_2\pmod{p}.$$
	Noting that $2\le b_1+b_3\le p-1$ and $2\le 2b_2\le p-1$, the above congruence yields 
	$$b_1+b_3=2b_2,$$
	that is, $b_1,b_2,b_3$ form a $3$-term arithmetic progression in $B$. Since $B$ contains no nontrivial $3$-term arithmetic progression over $\mathbb{Z}$, we have $b_1=b_2=b_3$ and hence $B_p$ has no nontrivial $3$-term arithmetic progression over $\mathbb{F}_p$. 
	
	For any $x\in\mathbb{F}_p$, let 
	$$B_p(x)=B_p+x=\left\{b_p+x: b_p\in B_p\right\}.$$
	We next compute the average of $\#(B_p(x)\cap\mathcal{P}_p)$, where $x$ ranges over $\mathbb{F}_p$. By Lemma \ref{Lem. characteristic function of P}, one can verify that 
	\begin{align*}
		\frac{1}{p}\sum_{x\in\mathbb{F}_p}\#(B_p(x)\cap\mathcal{P}_p)
&=\frac{1}{p}\sum_{x\in\mathbb{F}_p}\sum_{b_p\in B_p}1_{\mathcal{P}_p}(b_p+x)\\
&=\frac{1}{p}\sum_{b_p\in B_p}\sum_{x\in\mathbb{F}_p}1_{\mathcal{P}_p}(b_p+x)\\
&=\frac{1}{p}\sum_{b_p\in B_p}\sum_{x\in\mathbb{F}_p}1_{\mathcal{P}_p}(x)\\
&=\frac{1}{p}\sum_{b_p\in B_p}\#\mathcal{P}_p\\
&=\frac{\varphi(p-1)}{p}\#B_p\\
&=\frac{\varphi(p-1)}{p}\#B.
	\end{align*}
	Thus, there is an $x_0\in\mathbb{F}_p$ such that 
	$$\#A_0\ge \frac{\varphi(p-1)}{p}\#B,$$
	where $A_0=B_p(x_0)\cap\mathcal{P}_p$. From this, applying (\ref{Eq. lower bound for B in the proof of Thm. B}) and Lemma \ref{Lem. on low bound for theta}, for the fixed number $\varepsilon\in(0,1)$, one can verify that 
	\begin{align}\label{Eq. A0 is large}
		  \frac{\#A_0}{\varphi(p-1)^{1-\varepsilon}}
&\ge \frac{\varphi(p-1)^{\varepsilon}}{p}\#B\notag\\
&\gg \frac{\varphi(p-1)^{\varepsilon}}{p^{C_1/\sqrt{\log p}}}\notag\\
&=\frac{p^{\varepsilon}}{p^{C_1/\sqrt{\log p}}}\frac{\varphi(p-1)^{\varepsilon}}{p^{\varepsilon}}\notag\\
&\gg \frac{p^{\varepsilon}}{p^{C_1/\sqrt{\log p}}\cdot (\log\log p)^{\varepsilon}}.
	\end{align}
	Since 
	$$\lim_{p\rightarrow+\infty}\frac{p^{\varepsilon}}{p^{C_1/\sqrt{\log p}}\cdot (\log\log p)^{\varepsilon}}=+\infty,$$
	and $\varepsilon\in(0,1)$ is fixed, when $p$ is sufficiently large, the inequality (\ref{Eq. A0 is large}) implies that 
	$$\#A_0\ge \varphi(p-1)^{1-\varepsilon}.$$
	Choose a subset $A\subseteq A_0$ with $\#A=\lfloor\varphi(p-1)^{1-\varepsilon}\rfloor$. Since $A\subseteq A_0=(B_p+x_0)\cap\mathcal{P}_p$ and $B_p+x_0$ contains no nontrivial $3$-term arithmetic progression, we see that $A$ has no nontrivial $k$-term arithmetic progression for any integer $k\ge3$. 
	
	In view of the above, we have completed the proof. \qed 
	
   \Ack\quad This research was supported by the National Natural Science Foundation of China (Grant No. 12671009).

\end{document}